\DeclareMathAlphabet{\mathpzc}{OT1}{pzc}{m}{it}
\newcommand{\vol}{\textnormal{vol}}
\def\R{\mathbb{R}}
\newtheorem{theorem}{Theorem}[section]
\newtheorem{lemma}[theorem]{Lemma}
\newtheorem{proposition}[theorem]{Proposition}
\newtheorem{corollary}[theorem]{Corollary}
\newtheorem{definition}[theorem]{Definition}
\newtheorem*{theorem*}{Theorem}
\newtheorem*{remark}{Remark}
\theoremstyle{plain}
\theoremstyle{plain}
\setlist[itemize]{leftmargin=*}
\def\bpf{\begin{proof}}
	\def\epf{\end{proof}}
\def\be{\begin{equation}}
	\def\ee{\end{equation}}
\def\bea{\begin{eqnarray}}
	\def\eea{\end{eqnarray}}
\def\bt{\begin{theorem}}
	\def\et{\end{theorem}}
\def\bl{\begin{lemma}}
	\def\el{\end{lemma}}
\def\br{\begin{remark}}
	\def\er{\end{remark}}
\def\bc{\begin{corollary}}
	\def\ec{\end{corollary}}
\def\bd{\begin{definition}}
	\def\ed{\end{definition}}
\def\bp{\begin{proposition}}
	\def\ep{\end{proposition}}
\def\Prob{\textnormal{Prob}^g_{\text{WP}}}
\def\Exp{\textnormal{E}^g_{\text{WP}}}
\def\Probb{\textnormal{Prob}^{g,n}_{\text{WP}}}
\def\Expp{\textnormal{E}^{g,n}_{\text{WP}}}
\newcommand{\eg}{\textit{e.g.\@ }}
\title{Large Steklov Eigenvalues on Hyperbolic surfaces}
\author{Xiaolong Hans Han}
\address{Yau Mathematics Science Centre, Tsinghua University, Beijing, China, 100084}
\email{xlhan@mail.tsinghua.edu.cn}
\author{Yuxin He}
\address{Yau Mathematics Science Centre, Tsinghua University, Beijing, China, 100084}
\email{hyx21@mails.tsinghua.edu.cn}
\author{Han Hong}
\address{School of Mathematics and Statistics, Beijing Jiaotong University, Beijing, China, 100091}
\email{honghan0927@gmail.com}
\date{September 2023}
\begin{document}
	
	\maketitle
	\begin{abstract}
		In this paper, we first construct a sequence of hyperbolic surfaces with connected geodesic boundary such that the first normalized Steklov eigenvalue $\tilde{\sigma}_1$ tends to infinity. We then prove that as $g\rightarrow \infty$, a generic $\Sigma\in \mathcal{M}_{g,n}(L_g)$ satisfies  $\tilde{\sigma}_1(\Sigma)>C\cdot \|L_g\|_1$ where $C$ is a positive universal constant. Here  $\mathcal{M}_{g,n}(L_g)$ is the moduli space of hyperbolic surfaces of genus $g$ and $n$ boundary components of length $L_g=(L_g^1,\cdots, L_g^n)$  endowed with the Weil-Petersson metric where $\|L_g\|_1\rightarrow\infty$ satisfies certain conditions.
	\end{abstract}
	
	\section{Introduction}
	Let $\Sigma$ be an orientable, compact, connected Riemannian surface with boundary. The Steklov eigenvalue problem is given by 
	\[\begin{cases}
		\Delta u=0 \ \ \ \ \ \text{on}\ \Sigma\\
		\partial_\eta u=\sigma u\ \ \ \text{on} \ \partial\Sigma
	\end{cases}\]
	where $\Delta$ is the Laplace–Beltrami operator on $\Sigma$ and $\eta$ is the outer unit normal vector along the boundary of $\Sigma$. The eigenvalues are ordered with multiplicity as
	\[0=\sigma_0<\sigma_1(\Sigma)\leq\sigma_2(\Sigma)\leq\ldots\rightarrow \infty.\]
	
	Consider the scaling invariant quantity, the first (nonzero) normalized Steklov eigenvalue, that is defined by $\tilde{\sigma}_1(\Sigma)=\sigma_1(\Sigma)\ell(\partial\Sigma)$ where $\ell$ denotes the length. Let $S(g,k)$ denote the class of the compact surfaces of genus $g$ with $k$ boundary components. The optimization of $\tilde{\sigma}_1(\Sigma)$ in $S(g,k)$ has been intensively studied during the last two decades. The following upper bound was given by Fraser and Schoen \cite{FS2012} 
	\begin{equation}\label{upperbound}\tilde{\sigma}_1(\Sigma)\leq 2\pi(g+k).\end{equation}
	For simply connected planar
	domains, inequality (\ref{upperbound}) is sharp and was proved by Weinstock in \cite{W1954}. On the other hand, an upper bound independent of the number of boundary components which makes it an estimate sharper than (\ref{upperbound}) for $k$ large enough was obtained by Kokarev in \cite{Kokarev}:
	\begin{equation}\label{upperbound2}
		\tilde{\sigma}_1\leq 8\pi(g+1).
	\end{equation}
	
	In this paper, we are concerned with compact surfaces with arbitrarily large first normalized Steklov eigenvalue $\tilde{\sigma}_1.$ A. Girouard and I. Polterovich raised a question in \cite{GP2012}: \textit{Is there a sequence $\Sigma_N$ of surfaces with boundary of genus $N$ tending to infinity such that $\tilde{\sigma}_1(\Sigma_N)\rightarrow \infty$  as $N \rightarrow\infty$? If yes, is it possible to achieve linear growth?} According to (\ref{upperbound}) and (\ref{upperbound2}), large $\tilde{\sigma}_1$ implies large genus. B. Colbois and A. Girouard proved in \cite{CG2018} that
	there exist a sequence  of compact surfaces $\Sigma_N$ with boundaries and a constant $C>0$ such that for each $N\in\mathbb{N}$, the genus of $\Sigma_N$ is $N+1$ and 
	\[\tilde{\sigma}_1(\Sigma_N)\geq C\cdot N.\]
	This result implies that $(\ref{upperbound2})$ is optimal. Their proof involves a comparison of eigenvalues on graphs and Steklov eigenvalues. Not only the genus but also the number of boundary components of the surface $\Sigma_N$ tend to infinity as $N\rightarrow \infty.$ It is interesting to know if the growth of the boundary components is necessary. Thus a natural question raised in \cite{CG2018} is whether there exists a sequence of compact surfaces with a fixed number of boundary components such that the first normalized Steklov eigenvalue becomes arbitrarily large. Colbois-Girouard-Raveendran \cite{CAR2018} constructed a sequence of surfaces with connected boundary such that its first normalized Steklov eigenvalue grows linearly to infinity.
	
	Based on the work from Mirzakhani \cites{Mirz13} and Nie-Wu-Xue \cite{NWX20} and inspired by the work  \cite{hewu2022} in which the second author with Wu constructs sequences of  Riemann surfaces with geodesic boundary such that the  first Laplacian eigenvalues of Neumann type have a uniform gap from 0, we prove the following theorem.
	\begin{theorem}\label{maintheorem1}
		There exist a sequence  of compact hyperbolic surfaces $\Sigma_N$with one geodesic boundary component and a uniform constant $C>0$ such that the genus of $\Sigma_N$ is $N-1$ and 
		\[\tilde{\sigma}_1(\Sigma_N)\geq C\cdot\log (N+1).\]
	\end{theorem}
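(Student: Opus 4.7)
My approach is to produce $\Sigma_N$ inside an appropriately chosen moduli space $\mathcal{M}_{g,1}(L)$, with $g=N-1$ and boundary length $L=L_N$ carefully calibrated so that $L_N$ grows on the order of $\log(N+1)$. If I can show that, for $N$ large, there exists a surface in $\mathcal{M}_{N-1,1}(L_N)$ with $\sigma_1 \ge c_0/L_N$ for a uniform constant $c_0$, then such a surface will satisfy $\tilde\sigma_1(\Sigma_N) \ge c_0 \cdot L_N \ge C\log(N+1)$, which is exactly what we want.

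The core analytic step is a comparison between $\sigma_1$ and the first Neumann Laplace eigenvalue $\lambda_1^N$. I would use the variational characterization
\[\sigma_1(\Sigma) = \inf\Bigl\{\int_\Sigma |\nabla u|^2\,dA : \int_{\partial\Sigma} u^2\,ds = 1,\ \int_{\partial\Sigma} u\,ds = 0\Bigr\},\]
and then bound $\int_{\partial\Sigma} u^2$ from above via a trace inequality performed inside a Fermi collar $U_\epsilon = \{x : d(x,\partial\Sigma) < \epsilon\}$ around $\partial\Sigma$, giving $\int_{\partial\Sigma}u^2 \lesssim \int_{U_\epsilon}(u^2 + |\nabla u|^2)\,dA$ with a constant depending only on $\epsilon$. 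Combined with $\int_\Sigma |\nabla u|^2 \ge \lambda_1^N \int_\Sigma (u-\bar u)^2$ and standard manipulations to absorb the mean $\bar u$ using $\int_{\partial\Sigma} u = 0$, this yields a lower bound of the form $\sigma_1 \gtrsim \lambda_1^N / (1+\lambda_1^N)$ that is independent of $L$, provided the Fermi collar width $\epsilon$ is controlled.

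To produce a surface satisfying the geometric hypotheses required above, I would follow the probabilistic strategy from Mirzakhani and Nie--Wu--Xue, adapted by the second author and Wu \cite{hewu2022} to the Neumann setting. Specifically, using Mirzakhani's integration formula I would bound from above the Weil--Petersson volume of the "bad" subset of $\mathcal{M}_{N-1,1}(L_N)$ consisting of surfaces whose Fermi collar around $\partial\Sigma$ is shorter than a fixed $\epsilon$, or which admit a short separating/non-separating geodesic that violates a Cheeger--Buser lower bound on $\lambda_1^N$. If this bad volume is strictly less than the total Weil--Petersson volume, then on the complement the Cheeger inequality (or Jammes' Steklov--Cheeger inequality directly) forces $\lambda_1^N \ge c_1 > 0$.

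The main obstacle is the interplay between two competing constraints: making $L_N$ as large as possible in order to maximize $\tilde\sigma_1 = \sigma_1 L_N$, while simultaneously guaranteeing via Mirzakhani-type volume estimates that a positive-measure set of surfaces still admits a Fermi collar of uniform width around the long boundary geodesic. The hyperbolic collar around a closed geodesic of length $L$ naturally shrinks as $L\to \infty$, so the argument will hinge on showing that generic surfaces in $\mathcal{M}_{N-1,1}(L_N)$ have no additional short geodesic intruding on the boundary, and this is precisely the place where the threshold $L_N \asymp \log(N+1)$ should arise, since this is the regime in which the Mirzakhani volume polynomials $V_{g,n}(L_N)$ yield nontrivial probability estimates when compared against the volumes of the bad loci.
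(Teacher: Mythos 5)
Your plan is viable in outline but it is not the paper's route, and as written it has one false implication and one large unaddressed gap. The paper proves this theorem by working in the moduli space of \emph{closed} surfaces: with probability tending to $1$ a closed $X_g$ has separating systole realized by a single geodesic $\gamma$ of length $\approx 2\log g$ cutting off an $S_{1,1}$, with a half-collar of width $\approx \frac12\log g$ on the $S_{g-1,1}$ side, and $h_C(X_g)$ bounded below (Mirzakhani). It then takes $\Sigma=S_{g-1,1}$, bounds $\sigma_1(\Sigma)\geq\frac14 h_C(\Sigma)\tilde h_J(\Sigma)$, gets $\tilde h_J(\Sigma)\gtrsim 1$ from the half-collar, and transfers the Cheeger bound from $X_g$ to $\Sigma$ by a direct case analysis. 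Your proposal instead works in $\mathcal{M}_{N-1,1}(L_N)$ and amounts to the paper's Theorem \ref{maintheorem2} with $n=1$; that is legitimate but requires strictly more machinery than the paper's proof of Theorem \ref{maintheorem1}.

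Two concrete problems. First, the opening reduction is wrong as stated: if $\sigma_1\geq c_0/L_N$ then $\tilde\sigma_1=\sigma_1 L_N\geq c_0$, a \emph{constant}, not $c_0L_N$. What you need, and what the rest of your sketch actually aims at, is a lower bound $\sigma_1\geq c_0$ uniform in $N$; I will read the $c_0/L_N$ as a slip, but the displayed implication does not give the theorem. Second, and more seriously, the heart of the matter — a uniform lower bound on $h_C$ (equivalently $\lambda_1^N$) for a generic bordered surface — is compressed into the phrase ``admit a short separating/non-separating geodesic that violates a Cheeger--Buser lower bound.'' A short non-separating geodesic is irrelevant to $h_C$, and small Cheeger constant is not caused by a single short geodesic: the competitors are systems of closed geodesics and free-boundary geodesic arcs whose total length may be as large as $c\,|A|\asymp g$. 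To control them you must (i) reduce arbitrary isoperimetric competitors to such geodesic configurations, which for surfaces with boundary requires the free-boundary extension of Morgan's isoperimetric classification (the paper's Theorem \ref{Isoperimetrisolution} and Lemma \ref{hgeqHH+1}); and (ii) sum the expected number of separating configurations of total length $\leq c|A|$ over \emph{all} topological types, checking that the exponential factor $e^{2\pi c m}$ loses to the volume ratio $\tilde W_m\tilde W_{2g-2+n-m}/V_{g,n}$ — this is where the threshold $c<\frac{\ln 2}{2\pi}$ and the hypothesis $L_N<\log g$ actually enter, not the collar geometry. Your trace-inequality/Fermi-collar step and the collar-width probability estimate are fine (the generic half-collar width is in fact $(\frac12-\epsilon)\log g$, far more than a fixed $\epsilon$), but without (i) and (ii) the plan is a program, not a proof.
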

	
	Notice that the sequence of surfaces has intrinsic curvature $-1$ and connected geodesic boundary. On the one hand, although the construction from \cite{CG2018,CAR2018} does not yield hyperbolic surfaces, modifications by quasi-isometries can yield hyperbolic surfaces. Thus Theorem \ref{maintheorem1} provides a different existence proof  from a probabilistic way. On the other hand, \cite{CG2018,CAR2018} give a sequence of surfaces with a linear genus growth. In this sense, our lower bound is not sharp. Nevertheless, it provides a different and new point of view to construct surfaces with large Steklov eigenvalues. Moreover, it motivates our second result about random surfaces. The proof of Theorem \ref{maintheorem1} relying on a modified Jammes's inequality and the estimates of two constants will be very important in the proof of Theorem \ref{maintheorem2} below.
	
	Regarding Theorem \ref{maintheorem1}, we can also prescribe the number $l$ of boundary components. Indeed, by removing $\ell-1$ small discs centered at different interior points $p_i\in \Sigma_N$ for $i=1,\cdots,\ell-1$ and modifying the metric slightly around those discs we can make the first normalized Steklov eigenvalue strictly larger than the one of $\Sigma_N$. However, the curvatures of surfaces in the sequence are no longer constant $-1$ and the new $\ell-1$ boundary components are not geodesic anymore. Readers can refer Proposition 4.3 in \cite{FS2016} for detailed arguments. 
	
	We now state our second result. Not only are we able to show the existence of a sequence of compact hyperbolic surfaces with connected geodesic boundary such that the first normalized Steklov eigenvalue tends to infinity as stated in Theorem \ref{maintheorem1}, but we also show that the probability that a random Riemann surface (with fixed finite number of boundary components) has a large first normalized Steklov eigenvalue is asymptotically one. For $L=(L^1,\cdots,L^n)\in \mathbb{R}_{\geq 0}^n$, let $\mathcal{M}_{g,n}(L)$ be the moduli space of hyperbolic surfaces of genus $g$ and with $n$ geodesic boundaries of length $L$. Let $\Probb$ be the probability measure with respect to the Weil-Petersson metric whose precise definition will be given in section \ref{weildefinition}. We view the first normalized Steklov eigenvalue as a nonnegative random variable on $\mathcal{M}_{g,n}(L)$, and have the following theorem.
	\begin{theorem}\label{maintheorem2}
		Assume that $\{L_g\}_{g\geq 2}$ is any  sequence of $n$-tuples with $L_g=(L_g^1,\cdots,L_g^n)$ such that $$\lim\limits_{g\to \infty}\sum_{i=1}^n L_g^i=\infty,  \quad L_g^i>1\quad \text{and }\quad\limsup_{g\rightarrow \infty}\frac{\sum_{i=1}^nL_g^i}{\log g}<1.$$ 
		%Consider the condition $(\star)$ defined for all $X_{g,1}\in \mathcal{M}_{g,1}(L_g)$ endowed with Weil-Petersson measure  that $$\tilde{\sigma}_1(X_{g,1})\geq C\cdot L_g$$ for a universal constant $C>0$. Then we have
		%$$
		%\lim \limits_{g\to \infty}\Probb\Big(X_{g,1}\in \mathcal{M}_{g,1}(L_g);\ \ X_{g,1}\ \text{satisfies condition}\ (*) )=1.
		%$$
		Then there exists a universal constant $C$ such that 
		\[\lim \limits_{g\to \infty}\Probb\Big(X_{g,n}\in \mathcal{M}_{g,n}(L_g);\ \ \tilde{\sigma}_1(X_{g,n})\geq C\cdot \sum_{i=1}^nL_g^i  )=1.\]
	\end{theorem}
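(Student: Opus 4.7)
The plan is to reduce Theorem~\ref{maintheorem2} to the probabilistic analysis of two geometric Cheeger-type constants, mimicking the deterministic strategy used in Theorem~\ref{maintheorem1}. Since $\tilde{\sigma}_1(\Sigma)=\sigma_1(\Sigma)\sum_{i=1}^n L_g^i$, the conclusion $\tilde{\sigma}_1\geq C\sum_{i=1}^n L_g^i$ is equivalent to $\sigma_1(\Sigma)\geq C$ for a universal constant $C>0$. The modified Jammes inequality that was central to the proof of Theorem~\ref{maintheorem1} furnishes a bound of the form $\sigma_1(\Sigma)\geq \tfrac{1}{4}h(\Sigma)h'(\Sigma)$, where $h(\Sigma)$ is an interior Cheeger-type constant and $h'(\Sigma)$ is a boundary-weighted isoperimetric constant. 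Hence it suffices to show that each of $h(\Sigma)$ and $h'(\Sigma)$ is bounded below by a universal positive constant on a Weil-Petersson event of probability $1-o(1)$.

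The lower bound on $h(\Sigma)$ follows the philosophy of \cite{NWX20} and \cite{hewu2022}. If $h(\Sigma)$ is small, a standard Cheeger-type argument produces a short separating multi-geodesic $\gamma$ whose two complementary subsurfaces have comparable area. I would estimate, via Mirzakhani's integration formula on $\mathcal{M}_{g,n}(L_g)$, the expected number of such multi-geodesics of total length at most a small universal $\varepsilon_0$, then sum the Weil-Petersson volume ratios over every admissible topological decomposition $(g_1,n_1)\sqcup(g_2,n_2)$ and apply Markov's inequality. The hypothesis $\limsup_{g\to\infty}\|L_g\|_1/\log g<1$ enters precisely here: the leading asymptotics of $V_{g,n}(L)$ contain factors that are exponential in $\|L\|_1$, and the bound $\|L_g\|_1<(1-\delta)\log g$ is exactly what makes this exponential growth absorbed by the polynomial decay of the volume denominators, so that the expectation tends to $0$.

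For $h'(\Sigma)$, the standing assumption $L_g^i>1$ combined with the collar lemma guarantees a uniform collar neighbourhood of each boundary component. On the same high-probability event that no short interior geodesic intersects these collars, a direct surgery argument using the coarea formula inside each collar yields $h'(\Sigma)\geq c'$ for a universal $c'>0$. Intersecting the two high-probability events and feeding the resulting lower bounds into the Jammes inequality gives $\sigma_1(\Sigma)\geq \tfrac{cc'}{4}$ with probability $1-o(1)$, which is the desired conclusion.

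The main obstacle is the volume estimate for $h(\Sigma)$: when the boundary lengths $L_g$ are allowed to grow, the Mirzakhani-Petri type counting argument does not apply verbatim, and one must carefully track the $L$-dependence of $V_{g,n}(L)$ through Mirzakhani's recursion while uniformly controlling the infinite sum over topological types of the separating multi-curve. The quantitative constraint $\|L_g\|_1<(1-\delta)\log g$ appears essentially sharp for this step: violating it would allow the exponential-in-$\|L_g\|_1$ growth of the volume polynomials to dominate the polynomial-in-$g$ decay of the volume ratios, invalidating the probabilistic estimate.
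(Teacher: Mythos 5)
Your overall architecture is the same as the paper's: reduce $\tilde{\sigma}_1\geq C\|L_g\|_1$ to $\sigma_1\geq C$, apply the modified Jammes inequality $\sigma_1\geq\frac14 h_C\tilde h_J$, and prove that each of $h_C$ and $\tilde h_J$ is uniformly bounded below with probability $1-o(1)$ via Mirzakhani's integration formula. However, two of your key steps have genuine gaps.

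First, for the Jammes-type constant you claim that ``$L_g^i>1$ combined with the collar lemma guarantees a uniform collar neighbourhood of each boundary component.'' The deterministic collar lemma gives a collar of width roughly $\operatorname{arcsinh}(1/\sinh(L_g^i/2))$, which tends to $0$ as $L_g^i$ grows; and even a collar of uniformly bounded width would not suffice, because an arc of $\partial_I\Omega$ crossing such a collar has length $O(1)$ while $\ell(\partial_E\Omega)$ can be as large as $\|L_g\|_1\asymp\log g$, so the ratio $\ell(\partial_I\Omega)/\ell(\partial_E\Omega)$ would not be bounded below. What is actually needed is a \emph{probabilistic} statement that every boundary component has a half-collar of width $(\frac12-\epsilon)\log g$ (the paper's Proposition \ref{lowerboundcollarwith}, proved by counting pairs of pants adjacent to the boundary via Mirzakhani's formula), so that collar-crossing arcs have length comparable to $\|L_g\|_1$. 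Moreover, with $n\geq 2$ boundary components even large half-collars do not finish the job: $\partial_I\Omega$ may consist of short collar arcs together with interior closed geodesics forming a short \emph{separating multicurve}, a configuration your event ``no short interior geodesic intersects these collars'' does not exclude; the paper rules this out separately (Proposition \ref{XgnL1}).

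Second, your plan for the Cheeger constant — count separating multi-geodesics ``of total length at most a small universal $\varepsilon_0$'' — is not the right event. A Cheeger minimizer with $h_C<c_1$ has $\ell(\partial_I X)\leq c_1|X|$, and $|X|$ can be of order $g$, so the separating systems to be excluded have length growing linearly in $g$, not bounded by a small constant. This is why the paper must first pass from constant-curvature minimizers to geodesic ones (Theorem \ref{Isoperimetrisolution} and Lemma \ref{hgeqHH+1}), then run the weighted sum $\sum e^{2\pi mc_2}\tilde W_m\tilde W_{2g-2+n-m}$ over all Euler characteristics $m$ using Lemma \ref{mir13corollary3.7}; the numerical threshold $c_1<\frac{\ln 2}{2\pi}$ (hence $C$ depending on $\frac{\ln 2}{2\pi+\ln 2}$) comes precisely from requiring $4\pi c_2<2\ln 2$ in that sum. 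Your identification of where the hypothesis $\limsup\|L_g\|_1/\log g<1$ enters is correct in spirit, but without the linear-in-area length threshold and the attendant exponential-weight summation, the probabilistic estimate for $h_C$ does not go through.
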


A direct consequence of Theorem \ref{maintheorem2} is that for any fixed integer $n\geq 1$, there exist infinitely  many sequences of hyperbolic surfaces with $n$ boundary components such that their first normalize Steklov eigenvalues grow to infinity as the genus tends to infinity.
	
	We briefly explain the proof of Theorem \ref{maintheorem2}. It relies on the proof of Theorem \ref{maintheorem1} and two other results: Theorem \ref{randomHj} and Theorem \ref{randomH} in which we prove that the probability that a random hyperbolic surface has the Cheeger's constant and the modified Jammes's constant uniformly bounded away from zero is asymptotically one, respectively. Most of the proof concentrates on computing the probability of two constants in Section 5.1 and Section 5.2. 
	
	However, since we are dealing with hyperbolic surfaces of (possibly) more than one boundary component, there are some differences.  As observed in Part I of the  proof of Theorem \ref{maintheorem1}, if the surface only has one boundary component, the large width of the half-collar of the boundary component implies positive lower bound of the modified Jammes's constant. This conclusion is not necessarily true in multiple boundary cases since the components of interior boundary $\partial_I\Omega$ (see definition in Section 2.1) of the admissible subset $\Omega$ could consist of arcs contained in half-collar of boundary and closed geodesics in the interior of the surface such that the total length of interior boundary $\partial_I\Omega$ of $\Omega$ is ``small''. Then the analysis conducted in case 1 and case 2 of Part I proof of Theorem \ref{maintheorem1} fails. However, in this scenario, the union of closed nontrivial  curves of $\partial_I\Omega$  gives a separating multicurve that separates the surface into two components. In Proposition 5.4 we show such surface is not ``common" in moduli space as genus goes to infinity. Hence we are left with the case in which the total length of $\partial_I\Omega$ is ``large''.  In this case, a positive lower bound of modified Jammes's constant follows from the definition.

	\begin{comment}
	The conditions on $L_g$ are added for only technical reasons. We could use examples as $L_g=\frac{1}{2}\log g$ or $\log\log g$ with lower rate of growth as $g\rightarrow \infty$. 
		\end{comment}
	
	\subsection*{Notations.} 
	For two functions $f_1$ and  $f_2$, if there exists a uniform constant $C>0$ independent of $g$ with the relationship $f_1(g)\leq C\cdot f_2(g)$ holding  for any $g$, then we say  
	$$f_1(g)\prec f_2(g) \quad \emph{or} \quad f_2(g)\succ f_1(g).$$
	We say $f_1\asymp f_2$ if $f_1\prec f_2$ and $f_1\succ f_2.$
	
	\subsection*{Acknowledgments}
	The authors would like to thank Professor Yunhui Wu for introducing this problem and helpful discussion. They also want to express gratitude to Yuhao Xue for discussion. We thank anonymous referees for their helpful comments and suggestions. This work is supported by Shuimu Tsinghua Scholar Program, China Postdoctoral Science Foundation No. 2021TQ0186
	and International Postdoctoral Exchange Fellowship Program No. YJ20210267.
	
	\section{Preliminaries}
	\subsection{Cheeger's constant and a modified Jammes's constant}
	Let $(\Sigma,\partial\Sigma)$ be a compact Riemannian surface with boundary. The classical Cheeger's constant (of Neumann type) is defined by
	\[h_C(\Sigma)=\inf_{\Omega}\max \left\{\frac{\ell(\partial_I\Omega)}{|\Omega|},\frac{\ell(\partial_I\Omega)}{|\Sigma|-|\Omega|}\right\},\]
	where $\Omega$ is a compact subset of $\Sigma$ and $\partial_I\Omega=\partial\Omega\cap \text{Int}(\Sigma).$ Here $\ell(\cdot)$ and $|\cdot|$ denote the length and the area with respect to the corresponding metric, respectively. It is well known (e.g. Lemma 8.3.6 in \cite{buser2010geometry}) that by Yau's observation we may restrict $\Omega$ to the set family for which $\Omega$ and $\Sigma\setminus \Omega$ are connected in the definition of $h_C$. To study the lower bound of Steklov eigenvalue, P. Jammes \cite{Jammesinequality} introduced the following Cheeger's type constant
	\[h_J(\Sigma)=\inf_{|\Omega|\leq |\Sigma|/2} \frac{\ell(\partial_I\Omega)}{\ell(\partial_E\Omega)},\]
	which is slightly different from Escobar's constant in \cite{Escobar}. Here $\Omega$ runs over compact subsets of $\Sigma$ and $\partial_E\Omega=\Omega\cap \partial \Sigma\neq \emptyset.$  For our purpose, we can slightly change the definition by adding restrictions on $\Omega$: $\partial_E\Omega\neq \emptyset$ and all components of $\Omega^c$ intersect $\partial\Sigma.$ We denote it by $\tilde{h}_J(\Sigma)$ for distinction and call it the modified Jammes's constant.
	
	We have the following Lemma. The proof is exactly the same as one of P. Jammes.
	\begin{lemma}[\cite{Jammesinequality}]\label{Jammes}
		The first nonzero Steklov eigenvalue of $\Sigma$ is bounded below in terms of the Cheeger's constant $h_C(\Sigma)$ and the modified Jammes's constant $\tilde{h}_J(\Sigma)$: 
		\[\sigma_1(\Sigma)\geq \frac{1}{4}h_C(\Sigma)\cdot \tilde{h}_J(\Sigma).\]
	\end{lemma}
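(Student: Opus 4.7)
The plan is to follow the Cheeger-type argument due to Jammes. By the variational characterization
\[
\sigma_1(\Sigma) = \inf \left\{ \frac{\int_\Sigma |\nabla u|^2 \, dV}{\int_{\partial\Sigma} u^2 \, ds} : u \in H^1(\Sigma),\ \int_{\partial\Sigma} u\, ds = 0,\ u \not\equiv 0 \right\},
\]
it suffices to prove $\int_\Sigma |\nabla u|^2 \geq \tfrac{1}{4} h_C(\Sigma) \tilde h_J(\Sigma) \int_{\partial\Sigma} u^2$ for every admissible test function $u$. I first reduce to a nonnegative function whose support has area at most $|\Sigma|/2$: pick $t \in \R$ so that $|\{u > t\}|, |\{u < t\}| \leq |\Sigma|/2$, and set $v_\pm := (u - t)_\pm$. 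Since $\int_{\partial\Sigma} u \, ds = 0$, one has $\int_{\partial\Sigma} (u-t)^2 \, ds \geq \int_{\partial\Sigma} u^2 \, ds$, and since $|\nabla u|^2 = |\nabla v_+|^2 + |\nabla v_-|^2$ a.e., it is enough to establish the inequality separately for each $v \in \{v_+, v_-\}$.

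For such a nonnegative $v$ with $|\mathrm{supp}\,v| \leq |\Sigma|/2$, I apply the coarea formula to $w := v^2$. Writing $\Omega_s := \{v > s\}$, one has $|\Omega_s| \leq |\Sigma|/2$ for every $s > 0$, and the coarea and layer-cake identities together with the substitution $\tau = s^2$ give
\[
\int_\Sigma |\nabla w|\, dV = \int_0^\infty 2s\, \ell(\partial_I \Omega_s)\, ds, \quad \int_{\partial\Sigma} w\, ds = \int_0^\infty 2s\, \ell(\partial_E \Omega_s)\, ds, \quad \int_\Sigma w\, dV = \int_0^\infty 2s\, |\Omega_s|\, ds.
\]
The definitions of $h_C$ and $\tilde h_J$ supply the pointwise bounds $\ell(\partial_I \Omega_s) \geq h_C\,|\Omega_s|$ and $\ell(\partial_I \Omega_s) \geq \tilde h_J\,\ell(\partial_E \Omega_s)$, which upon integrating against $2s\, ds$ yield the two integral inequalities $\int_\Sigma |\nabla w| \geq h_C \int_\Sigma w$ and $\int_\Sigma |\nabla w| \geq \tilde h_J \int_{\partial\Sigma} w$. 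Multiplying these and combining with Cauchy--Schwarz $\int_\Sigma |\nabla w| = 2\int_\Sigma v|\nabla v| \leq 2\bigl(\int_\Sigma v^2\bigr)^{1/2}\bigl(\int_\Sigma |\nabla v|^2\bigr)^{1/2}$ gives
\[
4 \Big(\int_\Sigma v^2\Big)\Big(\int_\Sigma |\nabla v|^2\Big) \geq \Big(\int_\Sigma |\nabla w|\Big)^2 \geq h_C\, \tilde h_J \Big(\int_\Sigma v^2\Big)\Big(\int_{\partial\Sigma} v^2\Big),
\]
from which $\int_\Sigma |\nabla v|^2 \geq \tfrac{1}{4} h_C \tilde h_J \int_{\partial\Sigma} v^2$ follows after cancelling $\int_\Sigma v^2$. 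Summing the bounds for $v_+$ and $v_-$ then produces the desired lower bound on the Rayleigh quotient of $u$.

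The main technical point is verifying that the Jammes bound $\ell(\partial_I \Omega_s) \geq \tilde h_J \ell(\partial_E \Omega_s)$ actually applies at every level, since the modified constant $\tilde h_J$ additionally requires $\partial_E \Omega_s \neq \emptyset$ and that every component of $\Omega_s^c$ meet $\partial\Sigma$. Levels with $\partial_E \Omega_s = \emptyset$ make both sides of the pointwise inequality vanish and cause no trouble; when some component $C$ of $\Omega_s^c$ is an interior pocket not meeting $\partial\Sigma$, one absorbs $C$ into $\Omega_s$, an operation that weakly decreases $\ell(\partial_I \Omega_s)$ and leaves $\ell(\partial_E \Omega_s)$ unchanged, so the pointwise inequality survives. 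This is the only place where the ``modified" nature of $\tilde h_J$ (in contrast to Jammes's original $h_J$) enters; otherwise the proof is identical to the original one.
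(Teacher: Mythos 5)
Your overall strategy (coarea formula applied to $v^2$, the two pointwise level-set inequalities from $h_C$ and $\tilde h_J$, Cauchy--Schwarz) is the same as the paper's, but there is a genuine gap in the one place where the two arguments diverge: you run the argument for an \emph{arbitrary} test function $u$ with $\int_{\partial\Sigma}u\,ds=0$, whereas the paper tests only with the first eigenfunction $f$. For an arbitrary $u$ the superlevel sets $\Omega_s$ need not be admissible for $\tilde h_J$, and your repair --- absorbing into $\Omega_s$ every component of $\Omega_s^c$ that misses $\partial\Sigma$ --- is not sound. The definition of $\tilde h_J$ carries the constraint $|\Omega|\leq|\Sigma|/2$ in addition to the two conditions you address, and absorption can destroy it: if $\Omega_s$ is a thin annular neighborhood of a curve enclosing most of the surface (plus a small piece touching $\partial\Sigma$), the enclosed pocket is a component of $\Omega_s^c$ disjoint from $\partial\Sigma$ whose area can exceed $|\Sigma|/2$ after absorption. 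For the resulting set $\Omega_s'$ the quotient $\ell(\partial_I\Omega_s')/\ell(\partial_E\Omega_s')$ is simply not controlled by $\tilde h_J$, so the pointwise inequality $\ell(\partial_I\Omega_s)\geq\tilde h_J\,\ell(\partial_E\Omega_s)$ is not justified at such levels, and the chain of integral inequalities breaks.

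The fix is exactly what the paper does: take $u=f$ the first eigenfunction (or, equivalently, restrict the variational characterization to harmonic test functions, which loses nothing since harmonic extension minimizes Dirichlet energy). Then $f$ is harmonic, and the maximum/minimum principle shows that for a.e.\ level the set $D_t=f^{-1}([\sqrt t,\infty))$ \emph{automatically} satisfies all three admissibility conditions: a component of $D_t^c$ avoiding $\partial\Sigma$ would force $f$ to be constant there by the minimum principle, $\partial_E D_t\neq\emptyset$ because the positive maximum of $f$ is attained on $\partial\Sigma$, and $|D_t|\leq|\Sigma_+|\leq|\Sigma|/2$ after the sign normalization. No absorption step is needed, and no median shift is needed either since $\int_{\partial\Sigma}f\,ds=0$ already. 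Everything else in your write-up (the coarea identities, the Cauchy--Schwarz step, and the summation over $v_\pm$) is correct.
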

	Although this lemma holds for higher dimensional manifolds with boundary, we only need it for surfaces. For the convenience of readers, we include the proof below.
	\begin{proof}
		Let $f$ be the first eigenfunction corresponding to the eigenvalue $\sigma_1$. Denote $\Sigma_+=f^{-1}([0,\infty))$ such that $|\Sigma_+|\leq \frac{1}{2}|\Sigma|$; otherwise replace $f$ with $-f$.  Then
		\begin{align*}
			\sigma_1(\Sigma)&=\frac{\int_{\Sigma_+}|\nabla f|^2}{\int_{\partial\Sigma_+}f^2}\\
			&\geq \frac{\left(\int_{\Sigma_+}f|\nabla f|\right)^2}{\int_{\Sigma_+}f^2\int_{\partial\Sigma_+}f^2}\\
			&=\frac{1}{4}\frac{\int_{\Sigma_+}|\nabla f^2|}{\int_{\Sigma_+}f^2}\frac{\int_{\Sigma_+}|\nabla f^2|}{\int_{\partial\Sigma_+}f^2}.
		\end{align*}
		Therefore using the Co-area formula, we have
		\[\int_{\Sigma_+}|\nabla f^2|=\int_0^\infty\ell(\partial_I D_t)dt\geq h_C\int_0^\infty |D_t|dt=h_C\int_{\Sigma_+}f^2\]
		where $D_t=f^{-1}([\sqrt{t},\infty))$. Since $D_t$ intersects $\partial\Sigma$ and all components of $D_t$ intersects $\partial\Sigma$ by the maximum principle, we have
		\[\int_0^\infty\ell(\partial_I D_t)dt\geq \tilde{h}_J\int_0^\infty \ell(\partial_E D_t)dt=\tilde{h}_J\int_{\partial\Sigma_+}f^2.\]
		The proof is complete.
	\end{proof}

Perrin considered a somewhat similar Jammes's type constant together with an inequality in \cite{perrin}. In both papers, what matters is an inequality that relates the first Steklov eigenvalue to the Cheeger's constant and a constant which reflects the geodesic information of hyperbolic surfaces with boundary.
	
	\subsection{The Weil-Petersson metric, probability measure.}\label{weildefinition}
	
	Let $S_{g,n}$ be a closed hyperbolic surface of genus $g\geq 2$ and with $n$ boundary components of lengths $L=(L_1,\cdots,L_n)\in \mathbb{R}_{\geq 0}^n$. Let $\mathcal{T}_{g,n}(L)$ be  the Teichm\"uller space of closed hyperbolic surfaces of genus $g$ with $n$ boundary components of length $L$.
 Here $L_i=0$ represents a cusp instead of a geodesic boundary component.
	The moduli space  $\mathcal{M}_{g,n}(L)$ is defined as $\mathcal{T}_{g,n}(L)/\text{Mod}_{g,n}$ where $\text{Mod}_{g,n}$ is the mapping class group of $S_{g,n}$ fixing the order of  boundary components. We write $\mathcal{M}_g=\mathcal{M}_{g,0}$ for simplicity. 
	
	The Teichm\"uller space is endowed with the Weil-Petersson metric and the Weil-Petersson symplectic form has a natural form in Fenchel-Nielsen coordinates \cite{Wolpert} proved by Wolpert:
	\[w_{\text{WP}}=\sum_{i=1}^{3g+n-3}d\ell_{\alpha_i}\wedge d\tau_{\alpha_i},\]
	where $(\ell_{\alpha_i},\tau_{\alpha_i})_{i=1}^{3g+n-3}$ are the global coordinates for the Teichm\"uller space $\mathcal{T}_{g,n}(L).$ The corresponding Weil-Petersson volume form is written as \[d\vol_{\text{WP}}=\frac{1}{(3g+n-3)!}\wedge^{3g+n-3}w_{\text{WP}}.\]
	This measure is invariant under the mapping class group, hence induces a measure on the moduli space $\mathcal{M}_{g,n}$, still denoted by $d\vol_{\text{WP}}.$ According to \cite{Mirz13}, the probability measure $\text{Prob}^{g,n}_{WP}$ is defined by
	\[\text{Prob}^{g,n}_{\text{WP}}(\Omega)=\frac{1}{\vol(\mathcal{M}_{g,n}(L))}\int_{\mathcal{M}_{g,n}}\chi_\Omega \ d\vol_{\text{WP}}.\]
	where $\Omega\subset \mathcal{M}_{g,n}(L)$ is a Borel subset, $\chi_\Omega$ is the characteristic function on $\Omega$ and $\vol(\mathcal{M}_{g,n}(L))$ is the total volume of moduli space which is finite.  For a measurable function $f$ on $\mathcal{M}_{g,n}(L)$, the expected value of $f$ is defined by \[
\Expp[f]=\frac{1}{\vol(\mathcal{M}_{g,n}(L))}\int_{\mathcal{M}_{g,n}}f \ d\vol_{\text{WP}}.
\]
Let $\text{Prob}^{g}_{\text{WP}}(\Omega)=\text{Prob}^{g,0}_{\text{WP}}(\Omega)$ and $\Exp[f]=\text{E}^{g,0}_{\text{WP}}[f]$.  Readers can refer to \cite{Mirz10} for more details.
	
	We first recall some results from Mirzakhani with her coauthors and Nie-Wu-Xue. Denote $V_{g,n}(x_1,\cdots,x_n)$ the Weil-Petersson volume of the moduli space $\mathcal{M}_{g,n}(x_1,\cdots,x_n)$ and $V_{g,n}=V_{g,n}(0,\cdots,0)$. Mirzakhani showed that $V_{g,n}(x_1,\cdots,x_n)$ is a polynomial in $x_1^2,\cdots, x_n^2$ with degree $3g+n-3$. In particular, as defined in \cite{Mirz07} $V_{0,3}(x,y,z)=1$.
	
	\begin{lemma}\label{Mirz13lemma}
		(1)\cite[Lemma 3.2]{Mirz13}
		\[V_{g,n}\leq V_{g,n}(x_1,\cdots,x_n)\leq e^{\frac{x_1+\cdots+x_n}{2}}V_{g,n}.\]
		(2)\cite[Lemma 3.2]{Mirz13} For $g,n\geq 0$,
		\[V_{g,n+4}\leq V_{g+1,n+2}.\]
		(3)\cite[Theorem 3.5]{Mirz13} For fixed $n\geq 0$, as $g\rightarrow \infty$ we have
		\[\frac{V_{g,n+1}}{2gV_{g,n}}=4\pi^2+O\left(\frac{1}{g}\right),\]
		\[\frac{V_{g,n}}{V_{g-1,n+2}}=1+O\left(\frac{1}{g}\right),\]
		where the implied constants are related to $n$ and independent of $g$.
	\end{lemma}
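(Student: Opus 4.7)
The plan is to invoke Mirzakhani's structural results on Weil–Petersson volumes cited in the statement; I sketch the underlying strategy for each part. The central input is her closed formula writing $V_{g,n}(x_1,\ldots,x_n)$ as a polynomial in $x_1^2,\ldots,x_n^2$ of degree $3g+n-3$, whose coefficients are nonnegative intersection numbers of $\psi$-classes and powers of the Weil–Petersson symplectic class on $\overline{\mathcal{M}}_{g,n}$. In particular, the constant term of this polynomial is exactly $V_{g,n}$.

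With this in hand, part (1)'s lower bound $V_{g,n}\leq V_{g,n}(x_1,\ldots,x_n)$ is immediate: evaluating a polynomial with nonnegative coefficients at nonnegative arguments can only increase (or preserve) the constant term. The exponential upper bound is obtained by dominating each coefficient of $x^{2\alpha}$ term by term against the Taylor coefficients of $V_{g,n}\exp\bigl((x_1+\cdots+x_n)/2\bigr)$; this reduces to a comparison between intersection numbers of different shapes on $\overline{\mathcal{M}}_{g,n}$, which Mirzakhani derives from her topological recursion. For part (2), one views cutting a non-separating handle of a surface representing a point in $\mathcal{M}_{g+1,n+2}$ as producing a surface representing a point in $\mathcal{M}_{g,n+4}$; applying Mirzakhani's integration formula writes $V_{g+1,n+2}$ as an integral involving $V_{g,n+4}$ against a nonnegative kernel in the two new cuff lengths, plus further nonnegative contributions, yielding the claimed inequality.

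Part (3) requires sharper asymptotic control on intersection numbers as $g\to\infty$ with $n$ fixed. The limit $V_{g,n+1}/(2gV_{g,n})\to 4\pi^2$ is obtained by applying the volume recursion once to $V_{g,n+1}$, isolating the leading-order term, and matching the factor $2g$ against the normalization produced by the additional Weil–Petersson symplectic factor; the constant $4\pi^2$ then arises from the stabilized ratio of top intersection numbers. The second asymptotic is analogous and relies on the corresponding stability of the leading intersection numbers under trading a unit of genus for two boundary components. The main obstacle, and the technical heart of Mirzakhani's argument, is obtaining the $O(1/g)$ error bound: this demands controlling all subleading terms in the polynomial expansion uniformly in $g$, which is accomplished through explicit coefficient estimates that propagate through iterations of the recursion.
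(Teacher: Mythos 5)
This lemma is stated in the paper purely as a citation of Mirzakhani's results (Lemma 3.2 and Theorem 3.5 of \cite{Mirz13}); the paper supplies no proof, so there is no in-paper argument to compare yours against. Your outline correctly attributes the statements and identifies the standard ingredients — nonnegativity of the intersection-number coefficients of the volume polynomial for the lower bound in (1), a term-by-term coefficient comparison for the exponential upper bound, the integration formula applied to cutting a nonseparating curve for (2), and the volume recursion with uniform control of subleading terms for (3) — but as written it is a summary of Mirzakhani's proofs rather than a self-contained argument: every substantive step (the coefficient estimates, the precise nonnegative decomposition in (2), and the $O(1/g)$ error bounds) is deferred to the cited work, which is exactly what the paper itself does.
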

	
	The following lemma improves $(1)$ of Lemma $\ref{Mirz13lemma}$.
	\begin{lemma}\cite[Lemma 22]{NWX20}\label{NWXlemma} Let $n\geq 1$, then
there exists a constant $c=c(n)>0$ such that for any $g\geq 1$ and 
		 $x_1,\cdots,x_n\geq 0$,  we have 
		\[\prod_{i=1}^n\frac{\sinh(x_i/2)}{x_i/2}(1-c(n)\frac{\sum x_i^2}{g})\leq\frac{V_{g,n}(x_1,\cdots,x_n)}{V_{g,n}}\leq \prod_{i=1}^n\frac{\sinh(x_i/2)}{x_i/2}.\]
	\end{lemma}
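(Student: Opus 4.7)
The strategy is to combine Mirzakhani's explicit polynomial expression for $V_{g,n}(L_1,\ldots,L_n)$ with a quantitative comparison of its coefficients to $V_{g,n}$. Writing
\[
V_{g,n}(L_1,\ldots,L_n) \;=\; \sum_{|\alpha|\leq 3g-3+n} \frac{[\tau_{\alpha_1}\cdots\tau_{\alpha_n}]_{g,n}}{\prod_i (2\alpha_i+1)!}\prod_{i=1}^n\Bigl(\frac{L_i}{2}\Bigr)^{\!2\alpha_i},
\]
where each $[\tau_{\alpha_1}\cdots\tau_{\alpha_n}]_{g,n}$ is a non-negative intersection number on $\overline{\mathcal{M}}_{g,n}$ with $[\tau_0\cdots\tau_0]_{g,n}=V_{g,n}$, and matching termwise against the Taylor series $\prod_i \sinh(L_i/2)/(L_i/2) = \sum_\alpha \prod_i (L_i/2)^{2\alpha_i}/(2\alpha_i+1)!$, the lemma is equivalent to the pair of coefficient estimates
\[
\Bigl(1-c(n)\tfrac{|\alpha|}{g}\Bigr)V_{g,n} \;\leq\; [\tau_{\alpha_1}\cdots\tau_{\alpha_n}]_{g,n} \;\leq\; V_{g,n}.
\]

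For the upper bound I would argue via Mirzakhani's recursion, comparing the intersection integral $\int \psi^\alpha\kappa_1^{3g-3+n-|\alpha|}$ against the pure $\kappa_1$ integral by descending induction on $|\alpha|$: non-negativity of the classes, together with the combinatorial weight $\tfrac{(2\pi^2)^{m-|\alpha|}}{(2\pi^2)^m}\cdot\tfrac{(3g-3+n)!}{(3g-3+n-|\alpha|)!}$, yields $[\tau_\alpha]_{g,n}\leq V_{g,n}$. For the lower bound I would trade each factor $\tau_{\alpha_i}$ with $\alpha_i\geq 1$ for an extra marked point via Mirzakhani's string- and dilaton-type identities; this converts $[\tau_\alpha]_{g,n}$ into a product of Weil--Petersson volume ratios $V_{g,n+k}/V_{g,n}$ times controlled combinatorial factors, and part (3) of Lemma~\ref{Mirz13lemma} pins each such ratio to $1+O(1/g)$, producing the cumulative multiplicative error $1-c(n)|\alpha|/g$ after $|\alpha|$ iterations. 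Summing the coefficient inequalities yields the upper bound $V_{g,n}(L)/V_{g,n}\leq \prod_i \sinh(L_i/2)/(L_i/2)$ directly and, on the lower side, leaves a remainder
\[
\frac{c(n)}{g}\sum_\alpha|\alpha|\prod_{i=1}^n \frac{(L_i/2)^{2\alpha_i}}{(2\alpha_i+1)!},
\]
which the elementary bound $\sum_{k\geq 1} k\,x^{2k}/(2k+1)! \leq \tfrac{x^2}{2}\cdot\sinh(x)/x$ (applied with $x=L_i/2$) converts into $\tilde{c}(n)\tfrac{\sum_i L_i^2}{g}\prod_i \sinh(L_i/2)/(L_i/2)$, giving the claimed factor $1-c(n)\sum_i L_i^2/g$.

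The main technical obstacle is establishing these two coefficient estimates with an effective rate $O(|\alpha|/g)$ whose constant depends only on $n$. The qualitative limit $[\tau_\alpha]_{g,n}/V_{g,n}\to 1$ as $g\to\infty$ is a consequence of the Mirzakhani--Zograf large-genus asymptotics, but tracking the error uniformly in $\alpha$ requires careful bookkeeping through the recursion; it is precisely here that the polynomial degree bound $|\alpha|\leq 3g-3+n$ and the ratio controls from Lemma~\ref{Mirz13lemma} are used to guarantee that $c(n)$ does not depend on $g$ or on the individual $\alpha_i$.
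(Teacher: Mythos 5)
You have correctly identified the skeleton: the paper itself gives no proof of this lemma (it is quoted verbatim from Nie--Wu--Xue), and the proof there runs exactly along the lines you describe --- expand $V_{g,n}(2L_1,\dots,2L_n)=\sum_{\alpha}[\tau_{\alpha_1}\cdots\tau_{\alpha_n}]_{g,n}\prod_i L_i^{2\alpha_i}/(2\alpha_i+1)!$, compare coefficients with those of $\prod_i\sinh(L_i)/L_i$, and resum. The upper bound does reduce to $[\tau_{\alpha_1}\cdots\tau_{\alpha_n}]_{g,n}\le V_{g,n}$, the lower bound to a deficit estimate $V_{g,n}-[\tau_{\alpha_1}\cdots\tau_{\alpha_n}]_{g,n}\le C(n)\frac{|\alpha|}{g}V_{g,n}$ (a quadratic bound $C(n)\frac{\sum_i\alpha_i^2}{g}V_{g,n}$ would also suffice, since $k^2/(2k+1)!\le\tfrac{1}{4}\cdot\tfrac{1}{(2k-1)!}$), and your resummation step using $\sum_{k\ge1}k\,x^{2k}/(2k+1)!\prec x^2\sinh(x)/x$ is correct.

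The gap is that the two coefficient inequalities, which carry all the content of the lemma, are asserted rather than proved, and the justifications you sketch for them do not work as stated. For the upper bound, non-negativity of $\psi_i$ and $\kappa_1$ alone does not compare $\int\psi^\alpha\kappa_1^{3g-3+n-|\alpha|}$, with its combinatorial weight, against $\int\kappa_1^{3g-3+n}$; the monotonicity $[\tau_{\alpha_1}\cdots]_{g,n}\le[\tau_{\alpha_1-1}\cdots]_{g,n}$ follows from a specific recursion among the brackets (due to Mirzakhani and Mirzakhani--Zograf) all of whose terms happen to be non-negative, and that recursion is the missing ingredient. For the lower bound, the string and dilaton identities go the wrong way for your purpose: they express a bracket with an \emph{extra} marked point carrying $\tau_0$ or $\tau_1$ as a combination of $n$-point brackets with shifted indices; they do not convert $[\tau_\alpha]_{g,n}$ into a product of ratios $V_{g,n+k}/V_{g,n}$, and the scheme as described would not produce an error of size $O(|\alpha|/g)$ uniformly in $\alpha$. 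The actual mechanism is again the recursion: the difference $[\tau_{\alpha_1-1}\cdots]_{g,n}-[\tau_{\alpha_1}\cdots]_{g,n}$ is a non-negative sum of lower-complexity brackets, each of size $O(V_{g,n}/g)$ by the volume-ratio estimates (parts (2)--(3) of Lemma~\ref{Mirz13lemma} and Lemma~\ref{mir13corollary3.7}), and one telescopes $|\alpha|$ times. You flag this as the ``main technical obstacle'' yourself, which is accurate: as written the proposal is a correct reduction together with an unproven core.
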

	We also need the following lemma in the main proof.
	\begin{lemma}\cite[Corollary 3.7]{Mirz13} \label{mir13corollary3.7}
		Let $b,k\geq 0$ and $C<C_0=2\ln 2$, then
		\[\sum_{\substack{g_1+g_2=g+1-k\\r+1\leq g_1\leq g_2}}e^{Cg_1}\cdot g_1^b\cdot V_{g_1,k}\cdot V_{g_2,k}\asymp \frac{V_g}{g^{2r+k}}\]
		as $g\rightarrow \infty.$
	\end{lemma}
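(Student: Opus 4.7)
The plan is to use the Weil–Petersson volume asymptotics of Lemma \ref{Mirz13lemma} to reduce the sum to its smallest-$g_1$ term, which will match $V_g/g^{2r+k}$ up to constants.

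As a preliminary step I would derive the ratio identity
\[
\frac{V_{g-m,k}}{V_g}\asymp \frac{1}{g^{2m-k}}\qquad (g\to\infty)
\]
for any fixed $m,k\geq 0$. The identity $V_{g,n}/V_{g-1,n+2}=1+O(1/g)$ in Lemma \ref{Mirz13lemma}(3) transfers genus to boundary components, giving $V_g\asymp V_{g-m,2m}$; the identity $V_{g,n+1}/V_{g,n}=8\pi^2 g(1+O(1/g))$ applied $|2m-k|$ times then converts $V_{g-m,2m}$ to $V_{g-m,k}$. Specializing $m=r+k$ yields $V_{g-r-k,k}\asymp V_g/g^{2r+k}$, and retaining only the $g_1=r+1$ summand already produces
\[
e^{C(r+1)}(r+1)^b\,V_{r+1,k}\,V_{g-r-k,k}\asymp \frac{V_g}{g^{2r+k}},
\]
which is the lower bound $\succ$.

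For the upper bound $\prec$ I would compare consecutive summands. The same ratio identity gives that the ratio of the $(g_1+1)$st term to the $g_1$st term is $e^C\cdot g_1^2/(g+1-k-g_1)^2$, up to $1+O(1/g_1)$ corrections. Near the left endpoint $g_1=r+1$ this ratio is $O(1/g^2)$, so geometric summation of the first $O(1)$ terms there already contributes $\asymp V_g/g^{2r+k}$. For larger $g_1$ the ratio increases but stays below $e^C<4$, while in parallel the quantity $V_{g_1,k}V_{g_2,k}/V_g$ picks up a factorial-type decay factor $\binom{2g-4}{2g_1-3+k}^{-1}$ that can be read off from Mirzakhani's sharper asymptotic $V_{g,n}\sim c_n g^{-1/2}(2g-3+n)!(4\pi^2)^{2g-3+n}$. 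The hypothesis $C<2\log 2$ is sufficient to dominate this binomial decay uniformly in $g_1$, ensuring that the intermediate and balanced ranges together contribute only $O(V_g/g^{2r+k})$.

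The main obstacle is the uniform control of the intermediate range $r+1\ll g_1\ll g/2$: the iteration of Lemma \ref{Mirz13lemma}(3) is cleanest when $g-g_1$ is bounded, whereas the balanced regime requires the Mirzakhani–Zograf-type asymptotic instead. Interpolating between the two regimes uniformly in $g_1$ is where the threshold $C_0=2\log 2$ enters, reflecting the exponential growth rate of the binomial coefficients that govern the middle-range decay. Since this is precisely \cite[Corollary 3.7]{Mirz13}, I would ultimately defer to Mirzakhani's proof there for the full technical details.
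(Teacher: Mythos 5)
The paper does not prove this lemma at all: it is quoted verbatim as \cite[Corollary 3.7]{Mirz13}, so there is no in-paper argument to compare against. Judged on its own terms, your sketch is sound and follows the standard route. The lower bound is complete and correct: iterating $V_{g,n}/V_{g-1,n+2}=1+O(1/g)$ and $V_{g,n+1}\asymp gV_{g,n}$ a fixed number of times gives $V_{g-r-k,k}\asymp V_g/g^{2r+k}$, and positivity of the remaining summands does the rest. For the upper bound your consecutive-term ratio $e^C(g_1/g_2)^2(1+O(1/g_1))$ is the right quantity, and the split into a geometrically decaying left range and an exponentially negligible balanced range is exactly how such sums are handled. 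Two remarks. First, you do not actually need the Mirzakhani--Zograf asymptotic $V_{g,n}\sim c_n g^{-1/2}(2g-3+n)!(4\pi^2)^{2g-3+n}$ (which postdates \cite{Mirz13} and is much deeper than what is being proved): telescoping your own ratio estimate from $g_1=r+1$ up to $g_1$ already yields $T(g_1)\prec T(r+1)\,e^{Cg_1}\prod(g_1'/g_2')^2$, and the product is a reciprocal binomial coefficient up to polynomial factors, so the balanced terms are $\prec e^{Cg/2}4^{-g}g^{O(1)}T(r+1)$; the crude split at, say, $g_1=g/4$, where the ratio is at most $e^C/9<1$, makes the hypothesis $C<2\ln 2$ visibly sufficient without any interpolation subtleties. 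Second, deferring the remaining bookkeeping to Mirzakhani's Corollary 3.7 is entirely consistent with what the authors themselves do, so I would not count it as a gap.
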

	
	\section{Proof of Theorem \ref{maintheorem1}}
	
	We first introduce some results that are essential to our proof. Let $\omega: \{2,3,\cdots\}\to \R^{>0}$ be a function satisfying
	\begin{equation}\label{cond omega}
		\lim_{g\rightarrow\infty} \omega(g) = +\infty \ \text{ and} \ \lim_{g\rightarrow\infty}\frac{\omega(g)}{\log\log(g)} =0.
	\end{equation}
	\begin{theorem}[{\cite[Theorem 1 and 2]{NWX20}}]\label{NXW}
		Let $\omega(g)$ be a function satisfying \eqref{cond omega} and $\epsilon>0$. Consider the following three conditions for $X \in\mathcal{M}_g$:
		\begin{enumerate}
			\item $|\ell_{\rm sys}^{\rm sep}(X) - (2\log g-4\log\log g)| \leq \omega(g)$;
			\item $\ell_{\rm sys}^{\rm sep}(X)$ is achieved by a simple closed geodesic $\gamma$ that separates the surface into $S_{1,1}\cup S_{g-1,1}$;
			\item There is a half-collar for $\gamma$ in the $S_{g-1,1}$-part of $X$ with width $\frac{1}{2}\log g - (\frac{3}{2}+\epsilon)\log\log g$.
		\end{enumerate}
		Then we have
		$$\lim_{g\rightarrow\infty} \Prob\left(X\in\mathcal{M}_g\ ;\ X \text{ satisfies } (1),\ (2) \text{ and } (3)\right) =1.$$
	\end{theorem}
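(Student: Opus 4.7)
The plan is to prove each of the three conditions holds with probability tending to one as $g\to\infty$ and then apply a union bound. The main engine is Mirzakhani's integration formula: for the mapping class group orbit of a separating simple closed curve of topological type $(h,g-h)$ and a continuous weight $F$,
\[
\int_{\mathcal{M}_g}\sum_{[\gamma]}F(\ell_\gamma(X))\,d\mathrm{vol}_{\mathrm{WP}}\;=\;c_{h,g}\int_0^\infty F(\ell)\,\ell\,V_{h,1}(\ell)\,V_{g-h,1}(\ell)\,d\ell,
\]
with $c_{h,g}\in\{\tfrac12,1\}$, together with its two-curve analogue for disjoint pairs. Dividing by $V_g$ produces expectations under $\Prob$. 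Let $N_h(X,L)$ count separating simple closed geodesics on $X$ of length at most $L$ that cut off a genus-$h$ subsurface, and write $L_\pm = 2\log g - 4\log\log g \pm \omega(g)$.

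For conditions (1) and (2), I first use $V_{1,1}(\ell)=(\ell^2+4\pi^2)/48$ together with Lemma \ref{NWXlemma} (to bound $V_{g-1,1}(\ell)$) and Lemma \ref{Mirz13lemma}(3) to compute $\Exp[N_1(X,L_-)]\to 0$ and $\Exp[N_1(X,L_+)]\to\infty$. Markov's inequality applied to the first limit gives the desired lower bound on the type-$(1,g-1)$ separating systole, while a second-moment calculation via the two-curve formula yields $\mathrm{Var}(N_1(X,L_+))=o(\Exp[N_1(X,L_+)]^2)$, so Chebyshev produces $N_1(X,L_+)\geq 1$ with high probability. To rule out shorter separating geodesics of type $(h,g-h)$ with $2\leq h\leq\lfloor g/2\rfloor$, I estimate $\Exp\big[\sum_{h\geq 2}N_h(X,L_+)\big]$ by inserting the bound $V_{h,1}(\ell)\leq(\sinh(\ell/2)/(\ell/2))V_{h,1}$ from Lemma \ref{NWXlemma}. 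The $\ell$-integral produces a factor of order $e^{L_+/2}\asymp g/(\log g)^2$, and the sum over $h$ of $V_{h,1}V_{g-h,1}/V_g$ is controlled by Lemma \ref{mir13corollary3.7} with $k=1$ and $C$ just below $2\ln 2$, providing decay in $g$ sharp enough to swamp the exponential factor as soon as $h\geq 2$. Together these give (1) and (2).

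For condition (3), conditioning on a type-$(1,g-1)$ separator $\gamma$ with $\ell_\gamma\in[L_-,L_+]$, the half-collar of width $w = \tfrac12\log g - (\tfrac32+\epsilon)\log\log g$ fails on the $S_{g-1,1}$ side exactly when some simple closed geodesic $\delta\subset S_{g-1,1}$ enters the open $w$-tube around $\gamma$. By the collar lemma applied to the disjoint pair $(\gamma,\delta)$, such a $\delta$ must be so short that, in effective terms, $\ell_\delta\leq C e^{-w}\asymp (\log g)^{3/2+\epsilon}/\sqrt{g}$. I then apply Mirzakhani's integration formula to the pair $(\gamma,\delta)$, enumerate the topological types of $\delta$ in $S_{g-1,1}$ (non-separating, or separating with a genus split), and substitute the bounds from Lemma \ref{NWXlemma}. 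For each fixed type the integrand becomes $\ell_\gamma\ell_\delta\sinh(\ell_\gamma/2)^2\sinh(\ell_\delta/2)^2/(\ell_\gamma^2\ell_\delta^2)$ times a product of volumes, which is integrated in $\ell_\gamma\in[0,L_+]$ and $\ell_\delta\in[0,Ce^{-w}]$; summation over $h$ via Lemma \ref{mir13corollary3.7} with the appropriate value of $k$ then yields a total expected count of order $e^{\omega(g)}/(g^{\alpha}(\log g)^{\beta})$ for some $\alpha,\beta>0$, which tends to zero.

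The main obstacle is the bookkeeping in (3). The number of topological types of $\delta$ grows with $g$, and the margin between the exponential gain $e^{-w}$ from $\ell_\delta$ being tiny and the exponential loss $e^{L_+/2}$ from $\ell_\gamma$ being large is delicate: the product $e^{L_+/2}\cdot e^{-w}$ is of order $\sqrt{g}/(\log g)^{1/2-\epsilon}$, so only a sharp use of Lemma \ref{mir13corollary3.7}, coupled with the ratio asymptotics of Lemma \ref{Mirz13lemma}(3), produces enough decay in $g$ to sum over all types and overcome this growth. This is precisely what forces the coefficient $\tfrac32+\epsilon$ rather than any cruder constant, and any looser choice of $w$ would break the summation. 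Once this is carried out, a union bound across (1), (2), (3) completes the argument.
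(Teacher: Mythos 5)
You should note first that the paper does not prove Theorem \ref{NXW} at all: it is quoted verbatim from \cite{NWX20}, so the only internal material to compare with is the analogous boundary-case argument the authors develop in Section 5 (Propositions \ref{collarbound} and \ref{lowerboundcollarwith}). Your outline for conditions (1) and (2) is the standard first/second-moment strategy and is essentially the route of \cite{NWX20}: first moments kill separating geodesics shorter than $L_-$ and all types $h\ge 2$ shorter than $L_+$, and a second moment produces a type-$(1,g-1)$ geodesic below $L_+$. One caveat even there: Mirzakhani's integration formula counts multicurves, i.e.\ \emph{disjoint} configurations, so ``the two-curve analogue for disjoint pairs'' does not by itself control $\Exp[N_1(X,L_+)^2]$; the contribution of pairs of intersecting type-$(1,g-1)$ geodesics needs a separate geometric argument (this is a genuine, nontrivial step in the actual proof, not bookkeeping).

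The serious gap is your treatment of condition (3). The failure of the width-$w$ half-collar on the $S_{g-1,1}$ side is \emph{not} characterized by some simple closed geodesic $\delta$ entering the $w$-tube around $\gamma$, and the collar lemma does not give $\ell_\delta\le Ce^{-w}$: the collar lemma guarantees an embedded collar about $\gamma$ only of width $\mathrm{arcsinh}\bigl(1/\sinh(\ell_\gamma/2)\bigr)$, which for $\ell_\gamma\approx 2\log g$ is of size $(\log g)^2/g$, so nothing forces a geodesic meeting the $w$-tube to be short. What actually governs embeddedness of the equidistant half-collar is a geodesic arc from $\gamma$ back to itself: if the maximal half-collar width is $d<w$, one gets an orthogeodesic of length $2d$, hence two closed geodesics $\alpha,\beta$ bounding a pair of pants with $\gamma$ and satisfying $\ell(\alpha)+\ell(\beta)\le\ell(\gamma)+4d+M$ --- exactly the mechanism of Proposition \ref{collarbound} and of the proof of Proposition \ref{lowerboundcollarwith} in this paper. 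These $\alpha,\beta$ have lengths up to roughly $2\log g$, not $O\bigl((\log g)^{3/2+\epsilon}g^{-1/2}\bigr)$, so the event you estimate is the wrong one and the delicate balance you describe ($e^{L_+/2}e^{-w}\asymp \sqrt{g}$) is an artifact of the mischaracterization. The correct count, via Mirzakhani's formula and Lemma \ref{NWXlemma}, is of pairs of pants $(\gamma,\alpha,\beta)$ with $x=\ell(\gamma)\le L_+$ and $\ell(\alpha)+\ell(\beta)\le x+4w+M$, giving an expected number of order
\begin{equation*}
\frac{1}{V_g}\int_0^{L_+}\!\!\int_{y+z\le x+4w+M} x^3\,e^{\frac{y+z}{2}}\,V_{g-2,2}\,dy\,dz\,dx \;\prec\; (\log g)^{4}\,e^{\frac{L_+}{2}}\,e^{2w}\,\frac{V_{g-2,2}}{V_g}\;\asymp\;(\log g)^{-1-2\epsilon}e^{\omega(g)/2},
\end{equation*}
which tends to zero precisely because $2w=\log g-(3+2\epsilon)\log\log g$; this is where the constant $\tfrac32+\epsilon$ genuinely enters, not through a short-geodesic/collar-lemma argument. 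As written, your proof of (3) would fail, so the proposal does not establish the theorem.
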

	
	\begin{remark}
		A direct corollary of Theorem \ref{NXW} is that  the expectation of the unbounded random 
		variable $\ell_{\rm sys}^{\rm sep}(X)$ grows at least of order $2\log g$ as $g$ tends to 
		infinity. In \cite{parlier2021simple} it was proved that the expectation is asymptotically $2\log g$ exactly.
	\end{remark}
	
	\begin{theorem}[{\cite[Theorem 4.8]{Mirz13}}]\label{Mirz13}
		Let $C<\frac{\log 2}{2\pi+\log 2}$, then
		\begin{equation*}
			\lim_{g\rightarrow\infty}\Prob\left(X\in\mathcal{M}_g\ ;\ h_C(X)<C\right)=0.
		\end{equation*}
	\end{theorem}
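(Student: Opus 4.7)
The plan is to bound the probability via the first-moment method: I would count, on average, the number of ``Cheeger-short'' separating multi-geodesics on $X \in \mathcal{M}_g$, and show this expectation tends to $0$ as $g \to \infty$. First, I would establish a geometric reduction: if $h_C(X) < C$, then $X$ admits a separating simple closed multi-geodesic $\gamma = \gamma_1 \sqcup \cdots \sqcup \gamma_k$ dividing $X$ into $\Sigma_1 \sqcup \Sigma_2$ with $|\Sigma_1| \leq |\Sigma_2|$ and $\ell(\gamma) < C|\Sigma_1|$. Yau's trick reduces $h_C$ to subsets $\Omega$ with $\Omega$ and $\Omega^c$ both connected, so $\partial_I \Omega$ is a disjoint union of simple closed curves; isotoping each to its geodesic representative strictly decreases length while preserving the bounding topology. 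By Gauss--Bonnet, if the two pieces have genera $g_1 \leq g_2$ then $g_1 + g_2 + k - 1 = g$ and $|\Sigma_1| = 2\pi(2g_1 + k - 2)$, so the length bound reads $\ell(\gamma) < L(g_1,k) := 2\pi C(2g_1 + k - 2)$.

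Next, I would apply Mirzakhani's integration formula to the counting function $N_T(X)$ of type-$T = (g_1,g_2,k)$ separating multi-geodesics of total length at most $L(g_1,k)$:
\[
\int_{\mathcal{M}_g} N_T(X) \, d\vol_{\text{WP}} = \frac{1}{|\mathrm{Sym}(T)|} \int_{\substack{x_i > 0 \\ \sum x_i < L(g_1,k)}} \Bigl(\prod_{i=1}^{k} x_i\Bigr) V_{g_1,k}(x)\, V_{g_2,k}(x)\, dx_1 \cdots dx_k.
\]
Applying Lemma \ref{NWXlemma} I would bound $V_{g_i,k}(x) \leq V_{g_i,k} \prod_j \sinh(x_j/2)/(x_j/2)$, and then use $\sinh(x/2) \leq e^{x/2}/2$. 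A Laplace-type estimate on the length simplex then yields
\[
\Exp[N_T] \prec \frac{V_{g_1,k}\, V_{g_2,k}}{V_g} \cdot e^{L(g_1,k)} \cdot \mathrm{poly}(g_1,k).
\]

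Finally, I would sum over all admissible topological types. For fixed $k$, Lemma \ref{mir13corollary3.7} controls the sum over $g_1$ of $e^{4\pi C g_1}\cdot \mathrm{poly}(g_1)\cdot V_{g_1,k} V_{g_2,k}/V_g$, which is absorbable provided $4\pi C < 2\ln 2$. The sum over $k$ contributes an additional exponential factor $e^{2\pi C k}$ together with $k$-dependent volume ratios, which I would reduce using Lemma \ref{Mirz13lemma}(2) to compare $V_{g_i,k}$ with closed-surface volumes $V_{g_i + \lceil k/2\rceil, 0}$. Balancing the competing exponentials $e^{4\pi C g_1}$ and $e^{2\pi C k}$ against the decay of $V_{g_1,k}V_{g_2,k}/V_g$ produces the sharp threshold $C < \log 2/(2\pi + \log 2)$; Markov's inequality then gives $\Prob(h_C(X) < C) \leq \sum_T \Exp[N_T] \to 0$. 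The main obstacle is precisely this last balancing step: the constant $2\pi + \log 2$ in the denominator emerges as the sum of the exponential rate $2\pi$ coming from the length bound $L \sim 4\pi C g_1$ via $\sinh(x/2)\sim e^{x/2}$, plus the intrinsic rate $\log 2$ governing the growth of $V_{g_1,k}V_{g_2,k}/V_g$, so achieving the stated constant requires tight accounting of all three contributions simultaneously.
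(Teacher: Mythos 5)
There is a genuine gap in your geometric reduction, and it is exactly the step that produces the constant. From $h_C(X)<C$ you cannot conclude that the geodesic representatives $\gamma$ of $\partial\Omega$ satisfy $\ell(\gamma)<C\,|\Sigma_1|$: tightening each boundary curve to its geodesic does decrease length, but it also decreases the enclosed area, and the discrepancy is not negligible. If a component of $\partial\Omega$ is an equidistant curve at distance $d$ from its geodesic, then $\ell(\partial\Omega)=\ell(\gamma)\cosh d$ while $|\Omega|$ can exceed $|\Sigma_1|$ by as much as $\ell(\gamma)\sinh d$ (the collar can carry most of the area of $\Omega$), so the only inequality you can extract is
\begin{equation*}
C>\frac{\ell(\gamma)\cosh d}{|\Sigma_1|+\ell(\gamma)\sinh d}\ \Longrightarrow\ \ell(\gamma)<\tfrac{C}{1-C}\,|\Sigma_1|,
\end{equation*}
not $\ell(\gamma)<C|\Sigma_1|$. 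This is precisely the content of the intermediate step that the standard proof (Mirzakhani's, and the boundary version in this paper) inserts: one first passes to the geodesic Cheeger constant $H$ via the isoperimetric characterization of minimizers by constant-curvature curves (circles, equidistant curves, geodesics, cf.\ Theorem \ref{Isoperimetrisolution}) and proves $h_C\geq H/(H+1)$ as in Lemma \ref{hgeqHH+1}; the first-moment count is then run for $H$, with length bound $\frac{2\pi C}{1-C}(2g_1+k-2)$, and Lemma \ref{mir13corollary3.7} requires $\frac{4\pi C}{1-C}<2\log 2$, i.e.\ $C<\frac{\log 2}{2\pi+\log 2}$. The passage through circles/equidistant curves also disposes of regions (discs, annuli, null-homotopic or mutually homotopic boundary components) that have no separating geodesic representative at all, which your reduction silently assumes away.

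Consequently your final ``balancing'' paragraph, as written, does not derive the stated threshold: with your length bound $L(g_1,k)=2\pi C(2g_1+k-2)$ the computation would give the condition $4\pi C<2\log 2$, i.e.\ the larger (and unjustified) threshold $C<\frac{\log 2}{2\pi}$, and the extra $\log 2$ in the denominator of the true constant does not come from the growth of $V_{g_1,k}V_{g_2,k}/V_g$ as you claim, but from the $H\mapsto H/(H+1)$ correction above. The rest of your outline — Mirzakhani's integration formula, the bound $V_{g,k}(x)\leq V_{g,k}\prod_j \frac{\sinh(x_j/2)}{x_j/2}$, summation over topological types via Lemma \ref{Mirz13lemma} and Lemma \ref{mir13corollary3.7}, and Markov's inequality — is the right machinery and matches the standard argument once the reduction is repaired.
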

	
	\begin{remark}
		Note that $h_C$ in Theorem \ref{Mirz13} is Cheeger's constant for closed surface $X$. It is the same definition as one in Section 2.1 only by allowing $\partial\Sigma=\emptyset.$
	\end{remark}
	We now start the proof.
	
	\begin{proof}[Proof of Theorem \ref{maintheorem1}]
			
		According to Theorem \ref{NXW} and Theorem \ref{Mirz13}, for any  positive constant $\epsilon,$  for large enough fixed genus $g$ there exists a closed hyperbolic surface $X_g$ and a closed geodesic $\gamma$ on $ X_g$ realizing the separating systole of it and  separating $X_g$ into a union of $S_{1,1}$ and $S_{g-1,1}$, i.e., a compact surface with one genus and one boundary component and a compact surface with $g-1$ genus and one boundary component. Moreover, the Cheeger's constant of $X_g$ satisfies
		\begin{equation}\label{lowerboundforcheegerconstant}
			h_C(X_g)>\frac{\log 2}{2\pi+\log 2}-\epsilon.
		\end{equation}
		We will show that the compact surface $S_{g-1,1}$ is the desired surface $\Sigma$ with genus going to infinity such that 
		\[\sigma_1(\Sigma)\cdot \ell(\partial \Sigma)\rightarrow \infty,\ \ \ \text{as}\ \ \ g\rightarrow \infty.\]
		
		Hereafter, denote $S_{g-1,1}$ by $\Sigma$. By Theorem \ref{NXW}, $\partial\Sigma$ is a single closed geodesic and $(2-\epsilon)\log g\leq \ell(\partial \Sigma)\leq 2\log g$. Moreover, $\gamma=\partial\Sigma$ has a half-collar with width at least $(\frac{1}{2}-\epsilon)\log g$ inside $\Sigma$. Hence, all we need to prove is that $\sigma_1(\partial\Sigma)$ is bounded below uniformly by a positive constant. Due to Lemma \ref{Jammes}, it suffices to show that $h_C$ and $\tilde{h}_J$ of $\Sigma$ are uniformly bounded from below.
		
		\textbf{Part I:} We first deal with $\tilde{h}_J(\Sigma)$. We denote the half-collar by $C(\gamma)$ for simplicity.
		
	\begin{itemize}
		\item Case 1: The interior boundary $\partial_I \Omega$ is contained in  $C(\gamma).$ The exterior boundary $\partial_E\Omega$ separates $\gamma$ into $\alpha$ and $\beta$, i.e., $\gamma=\alpha\cup\beta$ and $\partial_E\Omega=\alpha$. It is readily seen that $\Omega$ is also contained in $C(\gamma)$. If not, the area would exceed half of the area of $\Sigma$ by simple calculations, which is excluded by the definition of $h_J$. This fact ensures that $\beta\cup\partial_I\Omega$ is not contractible. Since $\gamma$ is a geodesic, then $\ell(\partial_I\Omega)\geq \ell(\alpha)$, otherwise $\beta\cup\partial_I\Omega$ will bound a domain containing $S_{1,1}$ and contained in $S_{1,1}\cup C(\gamma)$. Replacing each component of it with the unique geodesic in the homotopic class, we find a geodesic bounding $S_{1,1}$ shorter than $\gamma$,
		contradicting the minimizing property of $\gamma$. That is, 
		\[\frac{\ell(\partial_I\Omega)}{\ell(\alpha)}\geq 1.\]
		\item Case 2: The interior boundary passes through the complement of the half-collar, i.e., $\partial_I\Omega\cap (\Sigma\setminus C(\gamma))\neq \emptyset$. In this case, $\ell(\partial_I \Omega)\geq (1-2\epsilon)\log g$. Apparently $\ell(\alpha)\leq 2\log g$, thus
		\[\frac{\ell(\partial_I\Omega)}{\ell(\alpha)}\geq (1-2\epsilon)/2.\]
	\end{itemize}
		According to the definition of $\tilde{h}_J(\Sigma)$, only the above two  cases need to be considered. Then in either case, we have that $\tilde{h}_J(\Sigma)\geq C_1>0.$
		
		\textbf{Part II:} We now estimate $h_C(\Sigma)$ from below which is more difficult than the estimate of $h_J(\Sigma)$. To do so, we need to compare Cheeger's constants of $\Sigma$ and $X_g$. Consider the class of all compact subsets $\Omega$ of $\Sigma$ and divide it into two subclasses A and B:
		\[A=\{\Omega\subset \Sigma: \partial\Omega\cap \gamma=\emptyset\}\ \text{and}\ B=\{\Omega\subset \Sigma: \partial\Omega\cap \gamma\neq \emptyset\}.\]
		
		\vskip.3cm
		\begin{itemize}
			\item Case 1: $|\Omega|\leq|\Omega^c|$.  By definition, we have
			\begin{equation}\label{A1}
				\inf_{\Omega\in A,\ |\Omega|<|\Omega^c|}\frac{\ell(\partial \Omega)}{|\Omega|}\geq h_C(X_g).
			\end{equation}
			When $\Omega\in B$, we have two cases. If $\Omega\cap \gamma\neq \emptyset$, denote the intersection by $\alpha$ and its complement by $\beta=\gamma\setminus \alpha$. By the definition again,
			\[h_C(X_g)\leq \frac{\ell(\alpha)+\ell(\partial_I\Omega)}{|\Omega|}.\]
			On the other hand, if $\partial_I\Omega$ is completely contained in the half-collar of $\gamma$, we also have  that $\ell(\alpha)\leq \ell(\partial_I\Omega)$. Thus $$h_C(X_g)\leq \frac{2\ell(\partial_I\Omega)}{|\Omega|}.$$ Otherwise, since $\partial_I\Omega$ passes through the half-collar at least twice, $\ell(\partial_I\Omega)\geq (1-2\epsilon)\log g$. As $\ell(\alpha)\leq2\log g$, it yields that $$\ell(\alpha)\leq \frac{2}{1-2\epsilon}\ell(\partial_I\Omega).$$
			Hence $$h_C(X_g)\leq \frac{3-2\epsilon}{1-2\epsilon}\ell(\partial_I\Omega)/|\Omega|.$$
			In both cases, from Theorem \ref{NXW} it follows that
			\begin{equation}\label{A2}
				\inf_{\Omega\in B,\ |\Omega|<|\Omega^c|}\ell(\partial_I\Omega)/|\Omega|\geq C\cdot h_C(X_g)
			\end{equation}
			for a positive universal constant $C$ depending only on $\epsilon$.
			
			\vskip.4cm
			\item Case 2: $|\Omega|>|\Omega^c|$.
			
			When $\partial\Omega\cap \gamma=\emptyset$, we get
			\[h_C(X_g)\leq \max\{\frac{\ell(\partial\Omega)}{|\Omega^c|+2\pi},\frac{\ell(\partial\Omega)}{|\Omega|}\}\leq \frac{\ell(\partial \Omega)}{|\Omega^c|}\]
			where the first inequality holds since by the Gauss-Bonnet equation we have that
			$|S_{1,1}|=2\pi$ and the second inequality follows by a simple comparison. Thus
			\begin{equation}\label{B1}
				\inf_{\Omega\in A,|\Omega|>|\Omega^c|}\frac{\ell(\partial \Omega)}{|\Omega^c|}\geq h_C(X_g).
			\end{equation}
			When $\partial\Omega\cap \gamma\neq \emptyset$, $\alpha$ and $\beta$ are defined same as before. Note that $\beta\cup \partial_I\Omega$ bounds the domain $\Omega^c$. Then
			\[h_C(X_g)\leq \frac{\ell(\beta)+\ell(\partial_I\Omega)}{|\Omega^c|}.\]
			It is not difficult to follow the analysis in Case 1 depending on whether $\partial_I\Omega$ is contained in the half-collar or not to obtain  that
			\begin{equation}\label{B2}
				\inf_{\Omega\in B,\ |\Omega|>|\Omega^c|} \frac{\ell(\partial_I\Omega)}{|\Omega^c|}\geq C\cdot h_C(X_g)
			\end{equation}
			where $C$ is a positive constant depending solely on $\epsilon.$
		\end{itemize}
		\vskip.3cm
		Finally, combining equations (\ref{A1}),(\ref{A2}),(\ref{B1}) and (\ref{B2}), we can get that
		\[h_C(\Sigma)=\inf_{\Omega\in A\cup B}\max \{\frac{\ell(\partial_I\Omega)}{|\Omega|},\frac{\ell(\partial_I\Omega)}{|\Omega^c|}\}\geq \min(C,1)\cdot h_C(X_g)\]
		Hence it follows from (\ref{lowerboundforcheegerconstant}) that
		\[h_C(\Sigma)\geq \min\{C,1\}\cdot \left(\frac{\log2}{2\pi+\log 2}-\epsilon\right)>0.\]
		The proof is complete.
	\end{proof}
	
	\section{Isoperimetric curves}
	
	The following theorem gives a characterization of minimizers of the isoperimetric problem in compact hyperbolic surfaces with geodesic boundary, generalizing the result in \cite{Morgan}.  
	
	\begin{theorem}\label{Isoperimetrisolution}
		Let $S$ be a compact hyperbolic surface with geodesic boundaries. For a given value $0<A<|S|$, a perimeter-minimizing system of embedded rectifiable curves bounding a region $\Omega$ of area $A$ consists of a set of curves of one of the following types.
		
		If the region has an empty intersection with the boundary of $S$,
		\begin{itemize}[leftmargin=1cm]
			\item[(1)] a circle,
			\item[(2)] two neighboring curves at a constant distance from a simple closed geodesic (equidistant curves), bounding an annulus or complement,
			\item[(3)] closed geodesics or single neighboring curves.
		\end{itemize}
		If the region has a nonempty intersection with the boundary of $S$,
		\begin{itemize}[leftmargin=1cm]
			\item[(4)] a half-circle with free boundary,
			\item[(5)] two free boundary neighboring arcs at a constant distance from a simple free boundary geodesic arc, bounding a half annulus or complement,
			\item[(6)] free boundary geodesic arcs and closed geodesics,
			\item[(7)] single free boundary equidistant arcs and single equidistant curves from simple closed geodesics.
		\end{itemize}
		Note that all curves in the set have the same curvature.
	\end{theorem}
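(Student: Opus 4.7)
The plan is to adapt Morgan's classification of isoperimetric minimizers on closed hyperbolic surfaces to the free-boundary setting. First I would invoke standard geometric measure theory to produce a minimizer bounding a region of area $A$, and use interior regularity together with the free-boundary regularity theory for area-minimizing rectifiable currents to conclude that its boundary is a $C^{1,1}$ system of embedded rectifiable curves, smooth in the interior of $S$ and meeting $\partial S$ in a $C^{1,1}$ manner. Taking first variations under area-preserving deformations yields the two facts that drive the classification: (i) each component has constant geodesic curvature equal to a common Lagrange multiplier $\kappa$, and (ii) every arc meeting $\partial S$ does so orthogonally.

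Next I would classify curves of constant geodesic curvature $\kappa$ on the compact hyperbolic surface $S$. Lifted to $\mathbb{H}^2$ they are circles ($\kappa>1$), horocycles ($\kappa=1$), hypercycles or equidistants ($0<\kappa<1$), or geodesics ($\kappa=0$); since $S$ is compact and has no cusps, embedded closed horocycles are excluded, which is why horocycles do not appear in the statement. For interior minimizers this reproduces Morgan's list (1)--(3): closed geodesic circles, pairs of equidistants bounding an annular neighborhood of a simple closed geodesic, and single closed geodesics or single equidistant curves. For the boundary-touching cases I would use the fact that $\partial S$ is totally geodesic to reflect across it and form the doubled surface $\widehat{S} = S \cup_{\partial S} S$, which inherits a smooth hyperbolic metric. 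An orthogonal free-boundary arc on $S$ doubles to an embedded smooth CGC curve on $\widehat{S}$, and applying the closed-surface classification on $\widehat{S}$ to configurations invariant under the reflection identifies the doubles as full circles (giving (4)), closed geodesics meeting $\partial S$ orthogonally (giving (6)), or pairs and singletons of equidistants symmetric about $\partial S$ (giving (5) and (7)).

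The main obstacle I anticipate is showing that the listed seven cases are genuinely exhaustive once interior and free-boundary components are allowed to coexist. Here condition (i) does the heavy lifting: all components of the minimizer must share the common value of $\kappa$, and for each such value the admissible CGC curves on $S$ fall into a single grouping above. Concretely, $\kappa>1$ admits only circles and half-circles, realized in types (1) and (4); $0<\kappa<1$ admits only equidistants of closed geodesics and equidistant arcs of free-boundary geodesic arcs, realized in types (2), (5), and (7); and $\kappa=0$ admits only closed geodesics and free-boundary geodesic arcs, realized in types (3) and (6). I would then verify that within each curvature regime the embeddedness, area-bounding, and orthogonality constraints leave exactly the configurations listed, and that the restriction to $\mathbb{Z}/2$-symmetric configurations on $\widehat{S}$ does not introduce additional cases. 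The final sentence of the theorem is then simply a restatement of (i).
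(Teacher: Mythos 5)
Your proposal follows essentially the same route as the paper's proof: existence and regularity (with orthogonal free boundary) from the classical geometric measure theory literature, classification of constant-curvature curves by lifting to the universal cover, reduction of the boundary-touching cases to Morgan's closed-surface classification by doubling across the totally geodesic boundary, and exhaustiveness via the common Lagrange-multiplier curvature. Your explicit exclusion of horocycles ($\kappa=1$) by compactness is a detail the paper leaves implicit, but the argument is the same.
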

	\begin{proof}
		The classical existence and regularity theorems from \cite{Almgren,Gruter,Tamanini} tell that there exists a perimeter minimizer among regions of the prescribed area bounded by embedded smooth curves with same constant curvature and, if these curves have a nonempty intersections with the boundary $S$, then they meet orthogonally along intersections. The possibility of cases (1),(2) or (3) have been shown in \cite{Morgan}. There are two important facts. One is that the region $\Omega$ has to be an annulus if it has an annulus component. The other one is that neighboring curves are not equidistant curves from nonsimple closed geodesic since they are not smooth, contradicting the regularity part of the theorem from \cite{Almgren,Gruter,Tamanini}. 
		
		We now explain cases (4),(5),(6),(7). 
		Lifting the surface to the universal covering, we know that a curve with constant curvature smaller than 1 is one of the equidistant lines around a geodesic and a curve with constant curvature higher than 1 is a circle. Firstly, by doubling the surface, we know that we can not have more than one half-circle with free boundary according to (1). Moreover, the half annulus or the complement is the only component of the region $\Omega$ if it contains half annulus or the complement according to (2). Lastly, whenever there are intersections between the curves and the boundary of the surface $S$, they meet perpendicularly. Thus the proof is complete.
	\end{proof}
	
	For compact hyperbolic surface $\Sigma$ with geodesic boundaries, we define the geodesic Cheeger's constant  as $$
	H(\Sigma)=\inf\limits_{\Gamma}\max\left\{\frac{\ell(\Gamma)}{|A|},\frac{\ell(\Gamma)}{|B|}\right\},  
	$$
	where $\Gamma$ consists of disjoint simple  free boundary geodesic arcs and simple closed geodesics that separate $\Sigma$ into two connected components $A$ and $B$. We call a domain such as $A$ or $B$ bounded by such $\Gamma$  a half surface.
	
	\begin{lemma}\label{hgeqHH+1}
For $L=(L_1,\cdots,L_n)\in \mathbb{R}_{\geq 0}^n$,
		let $\Sigma\in \mathcal{M}_{g,n}(L)$ be a hyperbolic surface of genus $g$ and $n$ geodesic boundary components with lengths $L$. Then 
		$$
		h_C(\Sigma)\geq \frac{H(\Sigma)}{H(\Sigma)+1}.
		$$
	\end{lemma}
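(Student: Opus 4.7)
The plan is to compare an (approximate) minimizer of the Cheeger functional with a nearby geodesic separator. Let $\Omega$ realize, or approximate, $h_C(\Sigma)$; by Yau's observation we may assume that both $\Omega$ and $\Omega^c$ are connected. Moreover, replacing $\Omega$ by an isoperimetric minimizer of the same area $|\Omega|$ can only shorten $\ell(\partial_I\Omega)$, hence can only decrease the Cheeger ratio. Theorem \ref{Isoperimetrisolution} then forces $\partial_I\Omega$ into exactly one of the seven listed types, and the proof proceeds by case analysis on that classification.

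Types (1), (2), (4), (5) — namely (half-)disks and (half-)annular regions bounded by circles or by pairs of equidistant curves — are disposed of by elementary hyperbolic identities. For a circle of radius $r$ one has $\sinh r \geq \cosh r - 1$, and for equidistant curves at distances $d_1,d_2$ from a common geodesic one has $\cosh d_1 + \cosh d_2 > \sinh d_1 + \sinh d_2$. In each of these cases a direct computation gives $\ell(\partial_I\Omega)/|\Omega| \geq 1$, so the desired inequality follows from $h_C(\Sigma)\geq 1 > H(\Sigma)/(H(\Sigma)+1)$ automatically.

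In the remaining types (3), (6), (7), every component of $\partial_I\Omega$ is either already geodesic (a closed geodesic or a free boundary geodesic arc) or a \emph{single} equidistant curve (resp.\ arc) at distance $d_i$ from a geodesic core $\gamma_i$. Form $\Gamma$ by keeping the geodesic components and replacing each equidistant $c_i$ by its core $\gamma_i$. Since a single equidistant curve bounds a region it must separate $\Sigma$, and hence so does its core; one checks that $\Gamma$ is a disjoint embedded geodesic system separating $\Sigma$ into two connected pieces $A,B$, so $\Gamma$ is admissible for $H(\Sigma)$. Using the identities
\[
\ell(c_i)=\ell(\gamma_i)\cosh d_i,\qquad |T_i|=\ell(\gamma_i)\sinh d_i
\]
for the collar $T_i$ bounded by $c_i$ and $\gamma_i$, the sides $|A|,|B|$ differ from $|\Omega|,|\Omega^c|$ by $\pm|T_i|$. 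Plugging $H(\Sigma)\leq \ell(\Gamma)/\min(|A|,|B|)$ into the target inequality $h_C(\Sigma)(H(\Sigma)+1)\geq H(\Sigma)$ and cross-multiplying, the claim collapses to
\[
|X|(\cosh d - 1)+\ell(\gamma)(\cosh d + \sinh d)\geq 0
\]
for the appropriate choice of $|X|\in\{|A|,|B|,|\Omega|,|\Omega^c|\}$, which is trivial.

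\textbf{Main obstacle.} The subtle part is the bookkeeping in types (3), (6), (7): one must track the correct sign of the collar shift $|A|=|\Omega|\pm|T|$ depending on whether the core $\gamma_i$ lies inside $\Omega$ or $\Omega^c$, and one must verify that the proposed $\Gamma$ really is admissible — disjoint, embedded, and separating — in every subcase. Scenarios where the substitution would break down, such as two equidistant components sharing the same core geodesic or the core being non-separating, occur only in the annular/disk types (1), (2), (4), (5), which are already handled by the trivial bound $h_C(\Sigma)\geq 1$.
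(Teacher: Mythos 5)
Your proposal is correct and follows essentially the same route as the paper: classify the minimizing boundary via Theorem \ref{Isoperimetrisolution}, dispose of the disk/annulus types by showing $h_C\geq 1$, and in the equidistant cases push $\partial_I\Omega$ to its geodesic core $\Gamma$ and use $\ell(c)=\ell(\gamma)\cosh d$, $|T|=\ell(\gamma)\sinh d$ to reduce to $H\leq \ell(\Gamma)/\min(|A|,|B|)$. The only quibble is a harmless sign slip in your final display (the cross-multiplication yields $\ell(\gamma)(\cosh d-\sinh d)$, not $\ell(\gamma)(\cosh d+\sinh d)$, which is still positive), and your explicit attention to the collar-side bookkeeping is if anything more careful than the paper's.
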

	\begin{proof}
		We assume that $\Gamma$ is the set of constant curvature curves which achieves the Cheeger's constant $h_C$, so $\Gamma$ must separate the surface into two connected components. According to Theorem \ref{Isoperimetrisolution}, $\Gamma$ is one of  cases (1)-(7). This inequality has been shown by Mirzakhani \cite{Mirz13} for cases (1)-(3). For other cases, the idea is similar so we briefly include it here. For example, we readily have $h_C\geq 1$ in cases (4)-(5) by applying the isoperimetric inequality to the double of the domain along the geodesic arc.
		%Indeed, we can lift the half disc or the  half annulus to a simply-connected domain $\Omega$ in $\mathbb{H}^2$ with some portion of the boundary of $\Omega$ lying on a geodesic, then apply divergence formula to the equation $\Delta r=\coth r$ on that domain where $r(x)=d(x,x_0)$ is the distance function to a point $x_0$ on the geodesic boundary portion, this will imply the relative isoperimetric inequality. 
		The inequality is obvious in case (6) since $h_C\geq H$ by the definition. In the last case, suppose that the set of closed geodesics and free boundary geodesic arcs that $\Gamma$ is homotopic to is $\Gamma'$. We denote the regions bounded by $\Gamma'$ and $\Gamma$ by $\Omega'$ and $\Omega$, respectively, and denote by $d$ the distance between $\Gamma'$ and $\Gamma$.  Then \begin{equation}
			h_C=\frac{\ell(\Gamma)}{|\Omega|}=\frac{\ell(\Gamma')\cosh d}{|\Omega'|+\ell(\Gamma')\sinh d}\geq \frac{H}{H+1},
		\end{equation}
		since $H\leq \ell(\Gamma')/|\Omega'|$. This completes the proof.
	\end{proof}

	\section{on random surfaces}
	%We use same notations as in \cite{Mirz13,NWX20}, and the method in the proof is similar to them.
	We will prove two probability results for the Cheeger's constant and the modified Jammes's constant in Section 5.1 and Section 5.2, respectively. Combining these two results we give a proof of the main Theorem \ref{maintheorem2}  in section 5.3.

	\subsection{Jammes' s constant on random hyperbolic surfaces}Assume $\{L_g\}_{g=2}^\infty$ is a sequence of $n$-tuples such that  $\lim\limits_{g\to \infty }\sum_{i=1}^n L_g^i =\infty$, $L_g^i>1$ and $\limsup\limits_{g\to \infty}\frac{\sum_{i=1}^nL_g^n}{\log g}<1$. We first show that as $g$ goes to infinity, a generic point $X\in \mathcal{M}_{g,n}(L_g)$, has uniformly positive Jammes's constant. More precisely,
	\begin{theorem}\label{randomHj}
		There exists some constant $c>0$ such that 
		$$
		\lim \limits_{g\to \infty}\Probb\Big(X_{g,n}\in \mathcal{M}_{g,n}(L_g); \tilde{h}_J(X_{g,n})>c \Big)=1.
		$$
	\end{theorem}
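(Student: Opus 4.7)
The plan is to combine a probabilistic nonexistence result for short separating multicurves with a deterministic reduction, following the outline sketched after Theorem~\ref{maintheorem2} in the introduction. Define the event $\mathcal{G}_g\subset\mathcal{M}_{g,n}(L_g)$ on which every simple multicurve $\Gamma\subset X_{g,n}$ that separates the surface into two pieces, each containing at least one boundary component, satisfies $\ell(\Gamma)\geq c_0\|L_g\|_1$ for a small universal constant $c_0>0$. Showing $\Probb(\mathcal{G}_g)\to 1$ is the main probabilistic step, and it is essentially the Proposition~5.4 referenced in the introduction.

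To show $\Probb(\mathcal{G}_g)\to 1$, I would apply Mirzakhani's integration formula to the counting function $N(X)=\#\{\Gamma\text{ separating }\partial X\text{ nontrivially}:\ell(\Gamma)\leq c_0\|L_g\|_1\}$. Summing over the (finitely many per $g$) topological types of such $\Gamma$ and invoking Lemmas~\ref{Mirz13lemma}, \ref{NWXlemma}, and \ref{mir13corollary3.7}, an expectation-then-Markov argument reduces the claim to a bound of the form $\Expp[N]=o(1)$. The hypothesis $\limsup \|L_g\|_1/\log g<1$ is designed precisely to defeat the exponential factor $\prod_i \sinh(L_g^i/2)/(L_g^i/2)$ coming from Lemma~\ref{NWXlemma}, so that the Weil--Petersson volume asymmetry between $\mathcal{M}_{g,n}(L_g)$ and the cut-up moduli spaces wins.

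For the deterministic step, I fix an admissible $\Omega\subset X_{g,n}$ and decompose $\partial_I\Omega=A\sqcup B$, where $A$ is the union of arcs with endpoints on $\partial X_{g,n}$ and $B$ is the union of closed curves in the interior. The collar lemma combined with the hypothesis $L_g^i>1$ yields a half-collar of some fixed width $w_0>0$ around each boundary component. If $\ell(\partial_I\Omega)\geq c_0\|L_g\|_1$, the modified Jammes ratio is immediately $\geq c_0$ since $\ell(\partial_E\Omega)\leq\|L_g\|_1$. Otherwise, shrinking $c_0<w_0$ forces every arc in $A$ to be too short to cross its half-collar and hence to stay within it; by hyperbolic geodesic minimization, as in Part~I of the proof of Theorem~\ref{maintheorem1}, each such arc bounds an embedded disc together with a subarc of $\partial X_{g,n}$, and is at least as long as that subarc. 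Trading each such disc between $\Omega$ and $\Omega^c$ reduces to the case $A=\emptyset$; in that case $B$ is a separating multicurve partitioning $\partial X_{g,n}$ nontrivially (by admissibility of $\Omega$) with total length $\leq\ell(\partial_I\Omega)<c_0\|L_g\|_1$, contradicting $\mathcal{G}_g$.

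The principal obstacle is the probabilistic step: uniform control over all topological types of separating multicurves while matching the boundary-length growth, in particular defeating the factor $\prod_i \sinh(L_g^i/2)/(L_g^i/2)$ by the volume-ratio decay supplied by Lemma~\ref{mir13corollary3.7}. A secondary nuisance is the topological bookkeeping of the disc-trading step, namely verifying that after the modification the region still satisfies the admissibility constraints and that $B$ still separates $\partial X_{g,n}$ into two nonempty pieces.
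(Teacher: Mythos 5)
Your overall architecture matches the paper's: a Mirzakhani-integration bound showing that short separating multicurves are rare (this is the paper's Proposition \ref{XgnL1}), plus a deterministic collar analysis reducing every other admissible $\Omega$ to that event or to the trivial bound $\ell(\partial_I\Omega)/\ell(\partial_E\Omega)\geq c_0$. The probabilistic multicurve step as you describe it is sound.

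The gap is in your deterministic step, and it is quantitative rather than bookkeeping. You invoke ``the collar lemma combined with $L_g^i>1$'' to get half-collars of a \emph{fixed} width $w_0>0$. This fails twice over. First, the deterministic half-collar width around a boundary geodesic of length $\ell$ is $\operatorname{arcsinh}(1/\sinh(\ell/2))$, which tends to $0$ as $\ell\to\infty$; since $\sum_i L_g^i\to\infty$ with $n$ fixed, some boundary lengths are unbounded and no uniform $w_0$ exists. Second, and more fundamentally, even granting a width-$w_0$ collar, the arcs of $A$ in the regime you are treating have total length up to $c_0\|L_g\|_1\to\infty$, so the condition ``$c_0<w_0$'' does not prevent an arc from crossing the collar: the comparison you need is between the collar width and $c_0\|L_g\|_1$, not between the collar width and $c_0$. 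An arc that crosses a narrow collar contributes only $O(w_0)$ to $\ell(\partial_I\Omega)$ while $\ell(\partial_E\Omega)$ can be of order $\|L_g\|_1$, so the Jammes ratio is not bounded below. The paper repairs exactly this point with an additional probabilistic ingredient (its Proposition \ref{lowerboundcollarwith}): with probability tending to $1$, every boundary component has an embedded half-collar of width $(\tfrac12-\epsilon)\log g$, and these are pairwise disjoint; this is proved by another Mirzakhani-integration count of pairs of pants bounded by boundary components together with short geodesics (using Proposition \ref{collarbound} to translate small collar width into short pants curves). Only then does an arc crossing a collar force $\ell(\partial_I\Omega)\geq(\tfrac12-\epsilon)\log g\geq(\tfrac12-\epsilon)\ell(\partial_E\Omega)$, where the hypothesis $\limsup_g\|L_g\|_1/\log g<1$ is what makes the collar width beat $\|L_g\|_1$. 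You should add this wide-collar event to your $\mathcal{G}_g$; without it the reduction to the multicurve case does not go through.
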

To prove this theorem, we need to prove  three propositions first.

	For any hyperbolic surface $X_{g,n}\in\mathcal{M}_{g,n}(L_g)$, if $d$ is the width  of  maximal half-collar of one boundary component $\eta$, then either 
there is a geodesic arc $\tau$ of length $d$ with two ends points lying on $\eta$ and another boundary component $\tilde{\eta}$ separately, or there is a geodesic arc $\gamma$ of length $2d$ with two ends lying on $\eta$.
For the first case, for $\epsilon>0$ small enough, the boundary  $\partial N_{\epsilon}(\eta\cup\tilde{\eta}\cup\tau)$ of the $\epsilon$-neighborhood $ N_{\epsilon}(\eta\cup\tilde{\eta}\cup\tau)$ of $\eta\cup\tilde{\eta}\cup \tau$ is homotopic to a geodesic $\xi$ that bounds a pair of pants along with $\eta$ and $\tilde{\eta}$. In particular, we have 
\begin{equation}\label{newcurves}\ell(\xi)\leq \ell(\eta)+\ell(\tilde{\eta})+2d,   \end{equation}
since $\xi$ is the unique closed geodesic in its homotopy class. 
 For the second case,
 the two ends on $\eta$ separate $\eta$ into $\eta_1,\eta_2$, so that loops $\eta_1\cup \gamma$ and $\eta_2\cup \gamma$ are homotopic to two closed geodesics $\alpha,\beta$, respectively. It is clear that $\{\eta,\alpha,\beta\}$ bounds a pair of pants. Similar to $(\ref{newcurves})$, we bound the length of $\alpha, \beta$ in the following proposition. 
	
	\begin{proposition}\label{collarbound}
		There exists a uniform constant $M>0$ such that if $\ell(\eta)>1$, then
		$$
		\ell(\alpha)+\ell(\beta)\leq 4d+\ell(\eta)+M.
		$$
	\end{proposition}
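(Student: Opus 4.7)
The plan is to apply the length-minimizing property of closed geodesics in their free homotopy classes. By the construction recalled just before the proposition, $\alpha$ is the closed geodesic freely homotopic to the piecewise-geodesic loop $\eta_1\cup\gamma$, and $\beta$ is the closed geodesic freely homotopic to $\eta_2\cup\gamma$. Since $\gamma$ has length $2d$ and $\eta_1,\eta_2$ partition $\eta$, we have
\[\ell(\eta_1\cup\gamma)=\ell(\eta_1)+2d,\qquad \ell(\eta_2\cup\gamma)=\ell(\eta_2)+2d.\]
The closed geodesic in a given free homotopy class minimizes length among all rectifiable curves in that class, so $\ell(\alpha)\leq \ell(\eta_1\cup\gamma)$ and $\ell(\beta)\leq \ell(\eta_2\cup\gamma)$. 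Summing these two inequalities yields
\[\ell(\alpha)+\ell(\beta)\leq \ell(\eta_1)+\ell(\eta_2)+4d=\ell(\eta)+4d,\]
and the conclusion of Proposition \ref{collarbound} then holds trivially for any choice of $M>0$.

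The only step requiring genuine justification is the identification of the free homotopy classes $[\eta_i\cup\gamma]=[\alpha]$ and $[\eta_j\cup\gamma]=[\beta]$ for $\{i,j\}=\{1,2\}$, which the paragraph preceding the proposition already asserts. I would verify it topologically via the pair of pants $P$ bounded by $\{\eta,\alpha,\beta\}$: because $\gamma$ is the shortest orthogeodesic from $\eta$ to itself realizing the maximal half-collar width, it is properly embedded in $P$ and separates $\alpha$ from $\beta$. Cutting $P$ along $\gamma$ therefore produces two annular regions $A_1, A_2$, each having one of $\alpha,\beta$ as one boundary circle and the corresponding piecewise smooth loop $\eta_i\cup\gamma$ as the other. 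The two boundary circles of an annulus are freely homotopic, which gives the claimed identification.

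I do not anticipate any substantial obstacle. The argument is essentially the same as the one producing (\ref{newcurves}) for the first case, applied twice: the expanded tubular neighborhood $\partial N_\epsilon(\eta\cup\gamma)$ now looks like a figure-eight and produces two homotopy classes of loops rather than the single loop $\xi$ from the first case, and each of the two loops is controlled by one of $\alpha,\beta$. The hypothesis $\ell(\eta)>1$ does not appear to be used for this particular bound; it is presumably retained for uniformity with the surrounding estimates where the collar geometry of $\eta$ plays a role.
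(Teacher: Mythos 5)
Your proof is correct, and it takes a genuinely different route from the paper's. The paper cuts the pair of pants bounded by $\{\eta,\alpha,\beta\}$ into two right-angled hexagons and uses the hyperbolic hexagon and pentagon formulas to express $\cosh d$ in terms of $\ell(\alpha),\ell(\beta),\ell(\eta)$, concluding $d=\max\{\tfrac{\alpha-\eta}{2},\tfrac{\beta-\eta}{2},\tfrac{\alpha+\beta-\eta}{4},0\}+O(1)$; the hypothesis $\ell(\eta)>1$ is what makes the $O(1)$ uniform there (it bounds $\sinh\tfrac{\eta}{2}$ away from zero), and the proposition follows by rearranging $d\geq\tfrac{\alpha+\beta-\eta}{4}+O(1)$. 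You instead run the same length-minimization argument that the paper itself uses to derive (\ref{newcurves}) in the first case: $\eta\cup\gamma$ is a theta graph whose regular neighborhood is a pair of pants with boundary classes $[\eta]$, $[\eta_1\cup\gamma]=[\alpha]$, $[\eta_2\cup\gamma]=[\beta]$, and the geodesic representative minimizes length in its free homotopy class, giving $\ell(\alpha)+\ell(\beta)\leq\ell(\eta)+4d$. This is shorter, yields the sharper constant $M=0$, dispenses with the hypothesis $\ell(\eta)>1$ (which, as you note, is only needed for the trigonometric route), and is more uniform with the treatment of the first case. The trade-off is that the paper's computation also extracts the exact asymptotics of $d$ in terms of all three boundary lengths (information beyond the stated inequality), whereas your argument gives only the one-sided bound — which is all that Proposition \ref{lowerboundcollarwith} actually uses.
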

	\begin{proof}
		Cut the pair of pants bounded by $\{\eta,\alpha,\beta\}$ into two right-angled hexagons. Three disjoint boundary geodesic arcs are of length $\frac{\ell(\eta)}{2}$, $\frac{\ell(\alpha)}{2}$ and  $\frac{\ell(\beta)}{2}$. For convenience we use a geodesic arc to represent its length.  Denote the boundary geodesic arc connecting $\eta$ and $\alpha$ by $b$, then it follows \cite[Formula Glossary 2.4.1(i)]{buser2010geometry} that 
		$$
		\cosh b=\frac{  \cosh \frac{\beta}{2}+\cosh\frac{\alpha}{2}\cosh\frac{\eta}{2} }
		{\sinh \frac{\alpha}{2} \sinh \frac{\eta}{2}},
		$$
		then by \cite[Formula Glossary 2.3.4(i)]{buser2010geometry} we have $$
		\begin{aligned}
			\cosh d&=\sinh \frac{\alpha}{2}\sinh b\\
			=&\frac{\sqrt{(\cosh\frac{\beta}{2}+\cosh\frac{\alpha}{2}\cosh\frac{\eta}{2})^2-(\sinh \frac{\alpha}{2}\sinh\frac{\eta}{2})^2}}{\sinh \frac{\eta}{2}}\\
			=&\frac{\sqrt{\cosh^2\frac{\beta}{2}+\cosh^2\frac{\alpha}{2}+\cosh^2\frac{\eta}{2}+2\cosh\frac{\alpha}{2}\cosh\frac{\beta}{2}\cosh\frac{\eta}{2}-1}}{\sinh \frac{\eta}{2}},\\
		\end{aligned}
		$$
		so $d=\max\{\frac{\alpha-\eta}{2},\frac{\beta-\eta}{2},\frac{\alpha+\beta-\eta}{4},0\}+O(1)$ by the assumption that $\ell(\eta)>1$. 
	\end{proof}
	\begin{proposition}\label{lowerboundcollarwith}
		For  any fixed small $\epsilon>0$,
		let $(A)$ represent the condition that each boundary component of $X_{g,n}$ has a half-collar neighborhood of width $(\frac{1}{2}-\epsilon)\log g$, and all of them are disjoint. Then 
		$$
		\lim \limits_{g\to \infty}\Probb\Big(X_{g,n}\in \mathcal{M}_{g,n}(L_g);  (A) \text{ fails.} \Big)=0.
		$$
	\end{proposition}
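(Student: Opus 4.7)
The strategy is to reduce failure of $(A)$ to the existence of certain short pair-of-pants configurations on $X_{g,n}$, estimate the expected number of such configurations via Mirzakhani's integration formula, and conclude with Markov's inequality.

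First, I would observe that if $(A)$ fails on $X_{g,n}$, then either some boundary component $\eta_i$ has maximal half-collar width $d<(\frac{1}{2}-\epsilon)\log g$, or the half-collars of width $(\frac{1}{2}-\epsilon)\log g$ around two distinct boundaries $\eta_i,\eta_j$ meet. Using the dichotomy stated just before Proposition~\ref{collarbound}---namely, the geodesic arc obstructing the maximal half-collar of $\eta_i$ either returns to $\eta_i$ itself or reaches some other $\eta_j$---together with formula~(\ref{newcurves}) and Proposition~\ref{collarbound}, one of the following configurations must exist on $X$: either (I) a simple closed geodesic $\xi$ bounding a pair of pants with $\eta_i,\eta_j$ and satisfying $\ell(\xi)\leq L_g^i+L_g^j+(1-2\epsilon)\log g$, or (II) two simple closed geodesics $\alpha,\beta$ bounding a pair of pants with $\eta_i$ and satisfying $\ell(\alpha)+\ell(\beta)\leq(2-4\epsilon)\log g+L_g^i+M$. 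Letting $N_1(X),N_2(X)$ denote the respective counts, Markov's inequality reduces the proof to showing $\Expp[N_1]+\Expp[N_2]\to 0$ as $g\to\infty$.

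Next, I would apply Mirzakhani's integration formula. For $N_1$, the complement of the pair of pants has topological type $(g,n-1)$, yielding
\[
\Expp[N_1]\leq\frac{1}{V_{g,n}(L_g)}\sum_{i<j}\int_0^{L_g^i+L_g^j+(1-2\epsilon)\log g}V_{g,n-1}\big(L_g^{(ij)},\ell\big)\,\ell\,d\ell.
\]
For $N_2$, the complement is either connected of type $(g-1,n+1)$ or splits as $(g_1,|S_1|+1)\sqcup(g_2,|S_2|+1)$ with $g_1+g_2=g$ and $|S_1|+|S_2|=n-1$, and analogous double-integral expressions hold. Lemma~\ref{NWXlemma} replaces each $V_{g',n'}(x_1,\ldots)$ by $V_{g',n'}\prod_i\sinh(x_i/2)/(x_i/2)$ up to a negligible multiplicative error (valid because $\sum_i(L_g^i)^2/g\to 0$), and Lemma~\ref{Mirz13lemma}(3) gives $V_{g,n-1}/V_{g,n}\asymp 1/g$ and $V_{g-1,n+1}/V_{g,n}\asymp 1/g$. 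The key algebraic cancellation is that the factor $(L_g^i/2)/\sinh(L_g^i/2)\sim L_g^i e^{-L_g^i/2}$ coming from the denominator $V_{g,n}(L_g)^{-1}$ absorbs the $e^{L_g^i/2}$ produced by $\sinh(\ell/2)$ at the upper integration limit, leaving only polynomial factors in $L_g^i$. Bounding $\int_0^T\ell\sinh(\ell/2)\,d\ell\prec Te^{T/2}$ and combining with the $1/g$ volume ratios, one obtains $\Expp[N_1]\prec(\log g)^3\cdot g^{-1/2-\epsilon}\to 0$ and $\Expp[N_2^{\mathrm{conn}}]\prec(\log g)^3\cdot g^{-2\epsilon}\to 0$.

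The main obstacle is the disconnected case of $N_2$, where the sum
$\sum_{g_1+g_2=g}\sum_{S_1}V_{g_1,|S_1|+1}\,V_{g_2,|S_2|+1}/V_{g,n}$
must be controlled. Since $n$ is fixed, the partition $S_1\sqcup S_2$ ranges over a finite set, reducing the essential task to bounding the sum over $(g_1,g_2)$ with $g_1+g_2=g$, which Lemma~\ref{mir13corollary3.7} delivers after grouping by $\min(g_1,g_2)$ and provides a decay factor $g^{-c}$ for some $c>0$. The hypothesis $\limsup_g\sum_iL_g^i/\log g<1$ is what guarantees the strict gap between the exponential growth $e^{T/2}\prec g^{1-2\epsilon}$ of the integrand and the volume decay, so that $\Expp[N_2^{\mathrm{disc}}]\to 0$ as well. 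Combining all contributions, Markov's inequality yields $\Probb\big((A)\text{ fails}\big)\to 0$, completing the proof.
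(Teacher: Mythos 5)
Your proposal follows essentially the same route as the paper: the same reduction of the failure of $(A)$ to the two short pair-of-pants configurations via the dichotomy before Proposition~\ref{collarbound} and inequality~(\ref{newcurves}), the same application of Mirzakhani's integration formula with Lemmas~\ref{Mirz13lemma}, \ref{NWXlemma} and \ref{mir13corollary3.7}, the same treatment of the connected versus disconnected complement in the second configuration, and the same cancellation of $e^{L_g^i/2}$ against $\sinh(L_g^i/2)^{-1}$. The paper bounds the probabilities directly (implicitly using that probability is at most expectation for integer counts), which is exactly your Markov-inequality step made explicit, and the minor differences in the polynomial prefactors are immaterial.
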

	\begin{proof}
Set the $n$ boundary components of $X_{g,n}$ by $\eta_1,\cdots,\eta_n$.
		If $(A)$ fails, there are two cases. For the first case,
there exists  a geodesic $\xi$ bounding a pair of pants along with two boundary components $\eta_i$ and $\eta_j$ for  $i\neq j$ satisfying $\ell(\xi)\leq \ell(\eta_i)+\ell(\eta_j)+(1-2\epsilon )\log g$.
For the second case, by  Proposition \ref{collarbound}, there exist two geodesics $\alpha$ and $\beta$ that  bound a pair of pants along with some boundary component $\eta_k$ with $\ell(\alpha)+\ell(\beta)\leq \ell(\eta_k)+(2-4\epsilon)\log g+M$. Notice that in the second case, the complement of the pant bounded by $\alpha,\beta,\eta_k$ may be connected, or have two disconnected components.

We start with the first case.  It follows  from Mirzakhani's integral formula (MIF) (see \cite[Theorem 7.1]{Mirz07}), Lemma \ref{Mirz13lemma} and Lemma \ref{NWXlemma} that for $g$ sufficiently large,
\begin{equation}
\begin{aligned}
&\Probb\Big(X_{g,n}\in \mathcal{M}_{g,n}(L_g); \exists \ \xi \textit{ bounding a pair of pants with } \eta_1 \textit{ and }\eta_2 \\
&\textit{satisfying } \leq L_g^1+L_g^2+(1-2\epsilon)\log g \Big)\\
\prec& \frac{1}{V_{g,n}(L_g)}\int_{0}^{L_g^1+L_g^2+(1-2\epsilon)\log g}V_{0,3}(L_g^1,L_g^2,x)V_{g,n-1}(x,L_g^3,L_g^4,\cdots,L_g^n)xdx\\
\prec&\frac{1}{V_{g,n}}\cdot \prod_{i=1}^n\frac{L_g^i}{\sinh{\frac{1}{2}L_g^i}}\int_0^{L_g^1+L_g^2+(1-2\epsilon)\log g}V_{g,n-1}\cdot \prod_{i=3}^n\frac{\sinh{\frac{1}{2}L_g^i}}{L_g^i}\sinh\frac{x}{2}dx\\
\prec&\frac{1}{g}\frac{L_g^1L_g^2}{e^{\frac{1}{2}L_g^1+\frac{1}{2}L_g^2}}
e^{\frac{1}{2}[L_g^1+L_g^2+(1-2\epsilon )\log g]}\\=&\frac{L_g^1L_g^2}{g^{\frac{1}{2}+\epsilon}}=o(1),
\end{aligned}
\end{equation}
where  the implied constants only depend on $n$. For the second case that $\alpha\cup\beta$ bounds a pair of pants $P$ along with $\eta_i$, as mentioned above $X_{g,n}\setminus P$ is connected or has two connected components $X_{g_1,n_1+1}$ and $X_{g_2,n_2+1}$ with $g_1+g_2=g$, $n_1+n_2=n-1$, $\alpha\subset  X_{g_1,n_1+1}$ and $\beta\subset  X_{g_2,n_2+1}$.  We fix $n$ here so that the possible choices  of $n_1,n_2$ are finite, and the possible choices of  $n_1$ boundary components belonging to $X_{g_1,n_1+1}$ are finite.
 It follows  from Mirzakhani's integral formula (MIF) (see \cite[Theorem 7.1]{Mirz07}), Lemma \ref{Mirz13lemma}, Lemma \ref{NWXlemma} and Lemma \ref{mir13corollary3.7} that for $g$ sufficiently large,
		$$
		\begin{aligned}
			&\Probb\Big(X_{g,1}\in \mathcal{M}_{g,1}(L_g); \exists  \  \alpha,\beta  \textit{ bound a pair of pants along with } \\
& \gamma_1 \textit{ satisfying } \ell(\alpha)+\ell(\beta)\leq \ell(\eta_1)+(2-4\epsilon)\log g+M
 \Big)\\
			\prec&\frac{1}{V_{g,n}(L_g)}\int_{\mathbb{R}_+^2}1_{[0,L_g^1+(2-4\epsilon)\log g+M]}(x+y)V_{0,3}(L_g,x,y)\\
			\cdot&\Big[V_{g-1,n+1}(x,y,L_g^2,\cdots,L_g^n)+\sum_{g_1+g_2=g}\sum_{n_1+n_2=n-1}\sum_{\{i_1,\cdots,i_{n_1}\}\cup\{j_1,\cdots,j_{n_2}\}=\{2,\cdots,n\}}\\
&V_{g_1,n_1+1}(x,L_g^{i_1},\cdots,L_g^{i_{n_1}})V_{g_2,n_2+1}(y,L_g^{j_1},\cdots,L_g^{j_{n_2}})
\Big]xydxdy \\
			\prec&\frac{1}{V_{g,n}}\cdot\prod_{i=1}^n \frac{L_g^i}{\sinh{\frac{L_g^i}{2}}}\int_{x+y\leq L_g^1+(2-4\epsilon)\log g+M}\Big(V_{g-1,n+1}+\sum_{g_1+g_2=g}\sum_{n_1+n_2=n-1}\\
&\sum_{\{i_1,\cdots,i_{n_1}\}\cup\{j_1,\cdots,j_{n_2}\}=\{2,\cdots,n\}}
V_{g_1,n_1+1}V_{g_2,n_2+1}\Big)\prod_{i=2}^n\frac{\sinh{\frac{L_g^i}{2}}}{L_g^i}e^{\frac{x+y}{2}}dxdy \\
			\prec&\frac{1}{g}\cdot \frac{L_g^1}{e^\frac{L_g^1}{2}}\cdot (L_g^1+2\log g+M)e^{\frac{L_g^1}{2}+(1-2\epsilon)\log g}\\
			\prec&\frac{\log^2 g}{g^{2\epsilon}}=o(1),\\
		\end{aligned}
		$$
		where  the implied constants only depend on $n$. 
		
		The same estimates hold for the probability that there exists a simple closed geodesic $\xi$ bounding a pair of pants with any $\gamma_i,\gamma_j$ or a pair of simple closed geodesics $(\alpha,\beta)$ bounding a pair of pants with any $\gamma_k$ which  satisfies the length conditions, in the first calculation and the second calculation, respectively. This completes the proof.
	\end{proof}
Unlike surfaces with one boundary component (see the proof of Theorem \ref{maintheorem1}), large width of the half-collars of all boundary components of $X_{g,n}$  do not always imply large modified Jammes's constant as we have explained in the introduction. We need the following proposition in the spirit of Lemma 31 in \cite{NWX20} that will be important in proving a positive lower bound of $\tilde{h}_J(\Sigma).$

Define $N_{g_0,k}(X_{g,n},L)$ to be the number of  multicurves $\gamma=\cup_{i=1}^s\gamma_i$  that separate $X_{g,n}$ into exactly two subsurfaces with one part homotopic to  $S_{g_0,k}$  and the total length of $\gamma$ is no more than $L$. 

\begin{proposition}\label{XgnL1}
For $L>1$ and any integer $m>0$, there exists a constant $c=c(n,m)>0$ independent of $L$ and $g$ such that  \[
\sum_{m\leq|2g_0-2+k|\leq \frac{1}{2}(2g-2+n)}\Expp\Big[ N_{g_0,k}(X_{g,n},L) \Big]\leq c(n,m)\frac{e^{2L}}{g^m}.
\]
In particular, if $L=(\frac{1}{2}-\epsilon)\log g$, the left-hand side of the above inequality goes to zero as $g$ tends to infinity.
\end{proposition}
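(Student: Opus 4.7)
The plan is to apply Mirzakhani's integration formula, control the integrand via Lemma \ref{NWXlemma} together with the elementary bound $\sinh(y)/y \leq e^y$, and then carry out the sum over topological types using Lemma \ref{mir13corollary3.7}.

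First, I would classify the topological types contributing to $N_{g_0,k}$. A separating multicurve $\gamma = \gamma_1\cup\cdots\cup\gamma_s$ of the prescribed type is determined by a subset $I\subseteq\{1,\ldots,n\}$ of original boundaries lying on the $S_{g_0,k}$ side (set $n_1 = |I|$, $s = k - n_1$), with the complementary side being $S_{g_1,k'}$ for $g_1 = g-g_0-s+1$ and $k' = s+n-n_1$. Since $n$ is fixed, there are $O_n(1)$ topological types per $(g_0,k)$, and Mirzakhani's integration formula gives, for each fixed type,
\[
\Expp[\#\{\gamma:\ell(\gamma)\leq L\}] \leq \frac{1}{V_{g,n}(L_g)} \int_{\substack{x_i\geq 0\\ \sum x_i\leq L}} V_{g_0,k}(\vec{x},L_g^I)\, V_{g_1,k'}(\vec{x},L_g^{I^c})\,\prod_{i=1}^s x_i \, d\vec{x}.
\]
Applying Lemma \ref{NWXlemma} to the numerator and (using $\sum L_g^j < \log g$, which forces $\sum (L_g^j)^2/g \to 0$) to the denominator, the factors $\prod_j\sinh(L_g^j/2)/(L_g^j/2)$ cancel. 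The estimate $\sinh(x/2)/(x/2)\leq e^{x/2}$ bounds the integrand by $\prod_i x_ie^{x_i}$, and the Dirichlet integral $\int_{\sum x_i\leq L}\prod_i x_i\,d\vec{x} = L^{2s}/(2s)!$ together with $e^{\sum x_i}\leq e^L$ controls the integral by $e^L L^{2s}/(2s)!$. This yields
\[
\Expp[\#\{\cdots\}] \leq C_n \frac{V_{g_0,k}\,V_{g_1,k'}}{V_{g,n}}\cdot \frac{e^L L^{2s}}{(2s)!}.
\]

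The remaining step is the sum over $(g_0,k,n_1)$ subject to $m\leq|2g_0-2+k|\leq\tfrac12(2g-2+n)$. Iterating Lemma \ref{Mirz13lemma}(3) gives $V_{g_0,s+n_1}\asymp g_0^{n_1}V_{g_0,s}$, $V_{g_1,s+n-n_1}\asymp g_1^{n-n_1}V_{g_1,s}$, and $V_{g,n}\asymp g^n V_g$; combined with $g_1\leq g$ this yields
\[
\frac{V_{g_0,k}\,V_{g_1,k'}}{V_{g,n}} \lesssim g^{-n_1}\cdot g_0^{n_1}\cdot \frac{V_{g_0,s} V_{g_1,s}}{V_g}.
\]
The Euler characteristic condition forces $g_0\geq r+1$ with $r := \max(0,\lceil(m-k)/2\rceil)$, and Lemma \ref{mir13corollary3.7} applied with parameter $k\mapsto s$, $b = n_1$, $C = 0$ gives $\sum_{g_0\geq r+1} g_0^{n_1} V_{g_0,s}V_{g_1,s}/V_g \lesssim g^{-(2r+s)}$. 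The net $g$-exponent is therefore $-(n_1+2r+s) = -(k+2r)\leq -m$. Summing $e^L L^{2s}/(2s)!$ over $s\geq 1$ is bounded by $e^L$, producing the claimed estimate $c(n,m)\cdot e^{2L}/g^m$.

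The hard part is the imbalance between the boundary counts $k$ and $k'$ on the two sides of the cut, which prevents a direct application of Lemma \ref{mir13corollary3.7}; the resolution is to normalize both sides to the common cut-boundary count $s$ via Lemma \ref{Mirz13lemma}(3), absorbing the residual polynomial factor $g_0^{n_1}$ into the weight $b$ of the corollary on the smaller side, and compensating $g_1^{n-n_1}$ against the $g^n$ factor relating $V_{g,n}$ to $V_g$ on the larger side.
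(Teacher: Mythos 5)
Your first half---the classification of topological types, Mirzakhani's integration formula, the cancellation of the $\sinh(L_g^i/2)/(L_g^i/2)$ factors via Lemma \ref{NWXlemma}, and the bound $e^L L^{2s}/(2s)!$ for the integral---matches the paper's computation and is fine. The gap is in the summation over topological types. You normalize both sides of the cut to $s$ boundary components by iterating Lemma \ref{Mirz13lemma}(3), writing $V_{g_0,s+n_1}\asymp g_0^{n_1}V_{g_0,s}$ and $V_{g_1,s+n-n_1}\asymp g_1^{n-n_1}V_{g_1,s}$, and then invoke Lemma \ref{mir13corollary3.7} with $k=s$. But $s$, the number of cut curves, is not bounded: it can be as large as $g+1-g_0$. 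Lemma \ref{Mirz13lemma}(3) is an asymptotic for a \emph{fixed} number of boundary components, with implied constants depending on that number, and Lemma \ref{mir13corollary3.7} is likewise stated for fixed $b,k$; neither provides constants uniform in $s$, so your ``net $g$-exponent'' computation is unjustified and the final sum over $s$ cannot be controlled by $\sum_s L^{2s}/(2s)!$ alone. A second, related failure concerns genus-zero pieces: if $k\geq m+2$ then $S_{0,k}$ is an admissible type, but $V_{0,s+n_1}\asymp 0^{n_1}V_{0,s}$ is meaningless, your claim that the Euler characteristic condition forces $g_0\geq r+1\geq 1$ is false there, and the summation range $g_1\geq r+1$ in Lemma \ref{mir13corollary3.7} excludes $g_0=0$ altogether, so these types are simply dropped.

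The paper avoids both problems by using the exact monotonicity $V_{g,n+4}\leq V_{g+1,n+2}$ of Lemma \ref{Mirz13lemma}(2), which holds for all $g,n$ with no hidden constants, to trade boundary components for genus rather than peeling them off with the asymptotic recursion: first $V_{g+1-g_0-s,\,n-k+2s}\leq V_{g+1-g_0,\,n-k}$ removes the $s$-dependence from the volume ratio, so the sum over $s$ factors out as $\sum_s e^L L^s/(s!)^2\prec e^{2L}$; then both factors are pushed down to $k',k''\in\{1,2,3\}$ boundary components (in particular genus-zero pieces become positive-genus ones), so Lemma \ref{mir13corollary3.7} is applied only with bounded parameters. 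The multiplicity of types $(g_0,k)$ collapsing onto a given $(g_1,k_1)$ is at most $1+g_1$, which is absorbed into the exponent $b$ of that corollary. If you replace your normalization step by this device, the rest of your argument goes through.
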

\begin{proof}
For the multicurve $\gamma=\cup_{i=1}^s\gamma_i$ that cuts off a $S_{g_0,k}$, it separates $X_{g,n}$ into two components, i.e., $S_{g_0,k}\cup S_{g+1-g_0-s,n-k+2s}$.
The boundary $\partial S_{g_0,k}$ contains $k-s$ boundary components of $X_{g,n}$. We compute the number of such $\gamma$'s that bound a $S_{g_0,k}$ along with boundaries $\eta_1,\cdots,\eta_{k-s}$.
 Applying 
Mirzakhani's integral formula (MIF) (see \cite[Theorem 7.1]{Mirz07}) to $1_{[0,L]}(x_1+\cdots+x_s)$ we have$$
\begin{aligned}
&\Expp\Big[ \#\big\{\gamma=\cup_{i=1}^s \gamma_i: \gamma \textit{ bound a }  S_{g_0,k} \textit{ along with } \eta_1,\cdots,\eta_{k-s},\  \ell(\gamma)\leq L  \big\} \Big]\\
&\prec\frac{1}{V_{g,n}(L_g)s!}\int_{\sum_{i=1}^sx_i\leq L}V_{g_0,k}(L_g^1,\cdots,L_g^{k-s},x_1,\cdots,x_s)\\
&\cdot V_{g+1-g_0-s,n-k+2s}   (L_g^{k-s+1},\cdots, L_g^n,x_1,\cdots,x_s)   x_1\cdots x_s dx_1\cdots dx_s,\\
\end{aligned}$$
where the implied constants are uniform.
Since $\sum_{i=1}^nL_g^i\leq \log g$ for large $g$, by   Lemma \ref{NWXlemma} for large $g$  we have \[\begin{aligned}
&\frac{V_{g_0,k}(L_g^1,\cdots,L_g^{k-s},x_1,\cdots,x_s) V_{g+1-g_0-s,n-k+2s}   (L_g^{k-s+1},\cdots, L_g^n,x_1,\cdots,x_s)  }{V_{g,n}(L_g)}\\
\prec&\frac{V_{g_0,k}V_{g+1-g_0-s,n-k+2s}}{V_{g,n}}\cdot\prod_{i=1}^s\frac{2\sinh\frac{x_i}{2}}{x_i},
\end{aligned}
\]
where  the implied constants only depend on $n$.
 Since $n$ is fixed, the possibility of the $k-s$ components are finite.
By  Lemma \ref{Mirz13lemma} we have \begin{equation}\label{eq-xgn-Ng0k}
\begin{aligned}
&\Expp\Big[N_{g_0,k}(X_{g,n},L)\Big]\\
\prec&\sum_{s\leq k}\frac{V_{g_0,k} V_{g+1-g_0-s,n-k+2s}  }{V_{g,n}}\frac{1}{s!}\int_{\sum_{i=1}^sx_i\leq L}e^{\sum_{i=1}^sx_i}dx_1\cdots dx_s\\
\prec&\sum_{s\leq k}\frac{V_{g_0,k}V_{g+1-g_0,n-k}}{V_{g,n}}\frac{e^{L}L^s}{(s!)^2}\\
\prec &\frac{V_{g_0,k}V_{g+1-g_0,n-k}}{V_{g,n}}e^{2L},
\end{aligned}
\end{equation}
where  the implied  constants only depend on $n$.
Notice that we can use the fact that $\sinh t\leq t\cosh t$ for any $t\geq 0$ in the first inequality.
 By Lemma \ref{Mirz13lemma} we have $$
V_{g_0,k}\leq V_{g_0+\frac{k-k^\prime}{2},k^\prime} 
$$ with $k-k^\prime $ even and $K^\prime\in\{1,2,3\}$. Similarly, $$
V_{g+1-g_0,n-k}\leq V_{g+1-g_0+\frac{n-k-k^{\prime\prime}}{2},k^{\prime\prime}}
$$ with $n-k-k^{\prime\prime}$ even and $k^{\prime\prime}\in\{1,2,3\}$. Then 
 it follows Lemma \ref{mir13corollary3.7} we have $$
\begin{aligned}
&\sum_{m\leq |2g_0-2+k|\leq \frac{1}{2}(2g-2+n)}\Expp\Big[ N_{g_0,k}(X_{g,n},L) \Big]\\
\prec&\sum_{\begin{matrix}
m\leq |2g_1-2+k_1|\leq \frac{1}{2}(2g-2+n)\\
2g_1+k_1+2g_2+k_2=2g+2+n\\
k_1,k_2\in\{1,2,3\}
\end{matrix}}\frac{(1+g_1)V_{g_1,k_1}V_{g_2,k_2}}{V_{g,n}}e^{2L}\\
\prec&\frac{1}{g^m}e^{2L},
\end{aligned}
$$
where the implied constants only depend on $n$ and $m$. This completes the proof of the proposition.
\end{proof}
\begin{remark}
As a corollary of Proposition \ref{XgnL1}, for a subset $\mathcal{M}\subset \mathcal{M}_{g,n}(L_g)$ containing surfaces $X_{g,n}$ where the minimal lengths of multicurve separating $X_{g,n}$ are  shorter than $(\frac{1}{2}-\epsilon)\log g$, the Weil-Petersson probability $\lim_{g\to\infty}\Probb\Big(\mathcal{M}\Big)=0$.  While this length can be replaced by $(2-\epsilon)\log g$ by doing more dedicated estimates similar to Lemma 30 in \cite{NWX20},  Proposition \ref{XgnL1} is enough for our purpose.
\end{remark}

We now prove Theorem \ref{randomHj}.
	\begin{proof}[Proof of Theorem \ref{randomHj}] Assume that $\partial X_{g,n}=\{\eta_1,\cdots,\eta_n\}$.
		Note that we assume  $\sum_{i=1}^nL_g^i\leq \log g$ for large $g$. The first part in the proof of Theorem \ref{maintheorem1} says that if $(A)$ holds,  then for  subset $\Omega$ of $X_{g,n}$ with $|\Omega|\leq \frac{1}{2}|X_{g,n}|$, $\partial_E\Omega\neq \emptyset$ and all components of $\Omega^c$ intersect with $\partial X_{g,n}$, when $\partial_I\Omega$ is  only contained in the disjoint union of half-collar $N(\eta_i,(\frac{1}{2}-\epsilon)\log g)$, or  passes through at least one half-collar, we have $$\frac{\ell(\partial_I\Omega)}{\ell(\partial_E\Omega)}\geq \frac{1}{2}-\epsilon.$$  
If these two cases do not happen and $\ell(\partial_I\Omega)< (\frac{1}{2}-\epsilon)\log g$, we shrink all components of $\partial \Omega$ to the unique geodesics in their homotopy classes. Then we will find a multicurve $\gamma=\cup_{i=1}^k\gamma_i$ that   separates $X_{g,n}$ into two parts
 with total length less than $(\frac{1}{2}-\epsilon)\log g$. 
Take $L=(\frac{1}{2}-\epsilon)\log g$ in Proposition \ref{XgnL1}, since all $N_{g_0,k}(X_{g,n},L)$ are integer-valued random variables,
we have \[\begin{aligned}
&\lim_{g\to\infty}\Probb\Big[  X_{g,n}\in\mathcal{M}_{g,n}(L_g),\exists\textit{ multicurve } \gamma \textit{ separates } X_{g,n}, \ell(\gamma)\leq L \Big]\\
\leq& \lim_{g\to\infty} \sum_{1\leq |2g_0-2+k|\leq \frac{1}{2}(2g-2+n)}\Expp\Big[ N_{g_0,k}(X_{g,n},L) \Big] \\
=&0.\\
\end{aligned}
\]
 Hence by Proposition \ref{lowerboundcollarwith} and the above inequality the theorem holds by choosing $c$ to be $\frac{1}{2}-\epsilon$ for any small $\epsilon>0$.
	\end{proof}

\vskip.3cm
	\subsection{The Cheeger's constant on random hyperbolic surfaces}
The condition on $L_g=(L_g^1,\cdots,L_g^n)$ that $\limsup\limits_{g\to\infty}\frac{\sum_{i=1}^nL_g^i}{\log g}<1$ is essential in this subsection.
	We will show that as $g$ goes to infinity, a generic surface $X_{g,n}\in \mathcal{M}_{g,n}(L_g)$ has uniformly positive Cheeger's constant. In fact, we have the following theorem.
	\begin{theorem}\label{randomH}
		For any constant $c_1<\frac{\ln 2}{2\pi}$, $$
		\lim\limits_{g\to\infty}\Probb\Big(X_{g,n}\in \mathcal{M}_{g,n}(L_g);  H(X_{g,n})<c_1
		\Big)=0.
		$$
	\end{theorem}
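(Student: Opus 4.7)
The plan is to adapt Mirzakhani's argument from \cite[Theorem 4.8]{Mirz13} --- which establishes the analogous statement for the ordinary Cheeger constant $h_C$ on closed surfaces --- to the bordered setting with the geodesic Cheeger constant $H$. Suppose $H(X_{g,n}) < c_1$. By definition there exists a disjoint union $\Gamma = \gamma \cup \tau$ of $s$ simple closed geodesics and $t$ simple free-boundary geodesic arcs separating $X_{g,n}$ into two connected pieces $A, B$ with $\ell(\Gamma) < c_1 \min(|A|,|B|)$. WLOG $|A|\leq |X_{g,n}|/2$, so by Gauss--Bonnet $|A| = 2\pi(2g_A - 2 + b_A)$ with $g_A, b_A$ the genus and boundary-circle count of $A$, and therefore $\ell(\Gamma) < L := 2\pi c_1(2g_A - 2 + b_A)$.

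Next I would stratify the bad configurations by their topological type $(g_A, s, t, I_A, P)$, where $I_A \subseteq \{1,\ldots,n\}$ indexes the full boundary components of $X_{g,n}$ lying in $A$ and $P$ encodes how the $2t$ endpoints of $\tau$ are distributed on $\partial X_{g,n}$ and how the resulting boundary segments are partitioned between $\partial A$ and $\partial B$. For each fixed type I would apply Mirzakhani's integral formula (MIF) to the closed-curve part $\gamma$, and an arc variant of MIF to $\tau$ (obtainable by first cutting along $\gamma$ and then integrating in Fenchel--Nielsen type coordinates over the arc lengths on the two resulting pieces). Using Lemma \ref{NWXlemma} to compare the polynomial volumes, the factors $\sinh(L_g^j/2)/(L_g^j/2)$ attached to full boundary lengths $L_g^j$ cancel between numerator and denominator, and each internal length variable $x_i$ (length of a component of $\gamma$ or $\tau$) produces a factor $\sinh^2(x_i/2)/(x_i/2)^2$ since $x_i$ is a boundary length of both $V_{g_A,b_A}(\cdots)$ and $V_{g_B,b_B}(\cdots)$. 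Using $\sinh^2(x/2) \sim e^x/4$ for $x$ large, one obtains
\begin{equation*}
\Expp\big[\#\{\text{bad }\Gamma\text{ of given type}\}\big] \prec \frac{V_{g_A, b_A}\, V_{g_B, b_B}}{V_{g,n}}\cdot e^L\cdot \mathrm{poly}(L).
\end{equation*}

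Finally, I would sum over topological types. Dominating $V_{g_A,b_A}$ and $V_{g_B,b_B}$ by $V_{g_A',k}$ and $V_{g_B',k}$ for $k\in\{1,2,3\}$ via Lemma \ref{Mirz13lemma}(2), and applying Lemma \ref{mir13corollary3.7} with $C = 4\pi c_1$ (since $L \sim 4\pi c_1 g_A$ when $g_A$ is of order $g$), the total weighted sum is $\prec g^{-2r-k}$ up to polynomial factors, which tends to $0$ as $g \to \infty$ provided $4\pi c_1 < 2\ln 2$, that is, precisely when $c_1 < \tfrac{\ln 2}{2\pi}$. The enumeration over the remaining labels $(s,t,I_A,P)$ contributes only polynomially many possibilities per complexity level, which is harmless against the exponential decay. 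Markov's inequality then converts the expected-count bound into the desired probability statement.

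The main obstacle is incorporating the arcs $\tau$: the classical MIF enumerates closed-curve multicurves only. The cleanest resolution is to cut $X_{g,n}$ along $\gamma$ first (where standard MIF applies), reducing the arc count to a problem on two smaller hyperbolic surfaces with boundary of fixed boundary lengths, where one may either invoke an arc-version of MIF from the Mirzakhani--McShane framework, or further decompose the cut pieces along $\tau$ into pairs of pants and right-angled hexagons and integrate directly. Either way, each arc length $x_i$ contributes the same $\sinh^2(x_i/2)/(x_i/2)^2$ factor as a closed curve, and the estimate above goes through unchanged.
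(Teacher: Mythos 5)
Your overall skeleton is the right one and matches the paper's endgame: reduce to counting separating configurations of total length at most $c_1$ times the (quantized) area of the smaller side, bound the expected count via Mirzakhani's integral formula together with Lemma \ref{NWXlemma} and Lemma \ref{Mirz13lemma}, and close with Lemma \ref{mir13corollary3.7}, which is exactly where the threshold $4\pi c_1<2\ln 2$, i.e.\ $c_1<\tfrac{\ln 2}{2\pi}$, comes from. However, the step you yourself flag as ``the main obstacle'' --- the free-boundary geodesic arcs --- is precisely where the paper's proof has its essential new content, and your proposed resolutions do not close the gap. There is no arc-version of MIF available in the framework the paper uses, and your assertion that each arc of length $x_i$ contributes the same $\sinh^2(x_i/2)/(x_i/2)^2$ factor as a closed geodesic is an unproved (and not obviously correct) claim: cutting along an arc does not split the volume polynomial as a product evaluated at $x_i$, because arcs carry no twist parameter and their endpoints subdivide the boundary circles, so the pieces $A$ and $B$ are not bordered surfaces whose boundary lengths are the $L_g^j$ and the $x_i$. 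For the same reason your claim that the $\sinh(L_g^j/2)/(L_g^j/2)$ factors simply cancel fails whenever arcs are present, and with it your proof loses track of where the hypothesis $\limsup_g \|L_g\|_1/\log g<1$ is needed --- a hypothesis the theorem cannot dispense with.

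The paper's route around this is different and concrete. First, it conditions on event $(A)$ of Proposition \ref{lowerboundcollarwith} (disjoint half-collars of width $(\tfrac12-\epsilon)\log g$ around all boundary components), which forces every free-boundary arc in $\partial_I X$ to have length at least $(1-2\epsilon)\log g$ and hence bounds the number of arcs by $m\leq c_1k\pi/((1-2\epsilon)\log g)$, where $|X|=k\pi$. Second --- and this is the key idea missing from your proposal --- it replaces $X$ by a small neighborhood $\tilde X$ of $X\cup\partial X_{g,n}$ and homotopes each boundary loop of $\tilde X$ to the closed geodesic in its class (ruling out trivial or mutually homotopic loops via the isoperimetric inequality and the area constraint). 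This produces an embedded subsurface bounded by a genuine simple closed multicurve, with area increased by at most $m\pi\leq\epsilon' k\pi$ and interior boundary length increased by at most $\sum_i L_g^i$; the standard closed-curve MIF count (Proposition \ref{boundchik}, via the computation \eqref{eq-xgn-Ng0k}) then applies, and the extra $e^{(c_2/c_1)\|L_g\|_1}$ factor is killed by the hypothesis on $L_g$. To repair your argument you would need either to carry out this reduction or to prove an actual arc-counting integration formula with the claimed asymptotics; as written, the central estimate is asserted rather than established.
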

	We will prove this theorem at the end of this section. Firstly, combining this with Lemma \ref{hgeqHH+1}, we directly have the following corollary which will be used in the proof of the main theorem of this article.
	\begin{corollary}\label{randomhc}
		For any constant $c<\frac{\ln 2}{2\pi+\ln 2}$, $$
		\lim\limits_{g\to\infty}\Probb\Big(X_{g,n}\in \mathcal{M}_{g,n}(L_g);  h_C(X_{g,n})<c 
		\Big)=0.
		$$
	\end{corollary}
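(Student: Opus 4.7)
The plan is to adapt Mirzakhani's proof of Theorem \ref{Mirz13} to the bordered setting, the novelty being that the separating system $\Gamma$ achieving $H(X_{g,n})$ may include simple free boundary geodesic arcs in addition to simple closed geodesics. Suppose $H(X_{g,n}) < c_1$ for some $c_1 < \ln 2/(2\pi)$; then there exists $\Gamma = \Gamma_c \sqcup \Gamma_a$ separating $X_{g,n}$ into $A \sqcup B$ with $|A| \leq |B|$ and $\ell(\Gamma) \leq c_1 |A|$. Since free boundary arcs meet $\partial X_{g,n}$ orthogonally, Gauss--Bonnet applied to $A$ (with right-angle corners contributing $\pi/2$ each) yields $|A| = 2\pi(2g_A - 2 + k_A) + (\pi/2)(\#\text{corners})$, where $k_A$ is the number of boundary components of $A$. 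Hence $\ell(\Gamma) \leq 2\pi c_1(2g_A - 2 + k_A) + O(1)(\#\text{corners})$, and $|A| \leq |X_{g,n}|/2 = \pi(2g-2+n)$ constrains the topology of the decomposition.

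I would then bound $\Probb(H(X_{g,n}) < c_1)$ by summing the expected number of such configurations over all topological types $T = (g_A, k_A, s, k, \ldots)$ together with the finitely many combinatorial ways of distributing the $2k$ arc endpoints among the boundary circles $\eta_1,\ldots,\eta_n$, where $s = \#\Gamma_c$ and $k = \#\Gamma_a$. For each fixed $T$, Mirzakhani's integration formula --- extended to include free boundary arcs, either directly via the orthogeodesic integration formula or by passing to the double $DX_{g,n}$ of genus $2g+n-1$ where arcs become closed curves --- combined with Lemma \ref{NWXlemma} to absorb the $L_g$-dependence of $V_{g,n}(L_g)$, gives
\[
\mathrm{E}\bigl[\#\{\Gamma\text{ of type }T\}\bigr] \prec \frac{V_{g_A,k_A}\, V_{g_B,k_B}}{V_{g,n}} \int_{\sum x_i + \sum y_j \leq L_T}\, \prod_{i=1}^{s}\frac{4\sinh^2(x_i/2)}{x_i}\, \prod_{j=1}^{k}4\sinh^2(y_j/2)\, dx\, dy,
\]
where $L_T = 2\pi c_1(2g_A - 2 + k_A) + O(\log g)$ absorbs both the corner term and the boundary-length contributions $\sum L_g^i \leq \log g$. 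The two $\sinh(\cdot/2)$-factors per curve arise because each interior component bounds both $A$ and $B$, contributing one factor via $V_{g_A,k_A}(x)$ and another via $V_{g_B,k_B}(x)$ through Lemma \ref{NWXlemma}. The integral is $\prec e^{L_T}$ up to polynomial factors in $L_T$.

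Summing over $T$, the exponential balance mirrors Mirzakhani's closed case. Using Lemma \ref{Mirz13lemma}(2) to trade excess boundary components for genus, one may reduce to $k_A, k_B \in \{1,2,3\}$; then Lemma \ref{mir13corollary3.7} gives $\sum_{g_A} V_{g_A,k_A} V_{g_B,k_B}/V_{g,n}\cdot e^{C g_A} \prec 1/g^{k_A}$ for any $C < C_0 = 2\ln 2$, which tends to $0$ as $g\to\infty$. Since $e^{L_T} \leq e^{4\pi c_1 g_A}\cdot g^{O(1)}$ (the $2g_A$ part dominating, the $k_A$ and corner parts being absorbed, and $O(\log g)$ becoming a polynomial factor), the combined exponent is negative precisely when $4\pi c_1 < 2\ln 2$, i.e.\ $c_1 < \ln 2/(2\pi)$ --- yielding the sharp constant. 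The hypothesis $\limsup \sum L_g^i /\log g < 1$ is what keeps the polynomial $g^{O(1)}$ strictly dominated by the $1/g^{k_A}$ gain.

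The main obstacle is the combinatorial and analytic bookkeeping for free boundary arcs. One must (i) enumerate all cyclic arrangements of the $2k$ arc endpoints along the $n$ circles $\eta_1,\ldots,\eta_n$ and track which subarcs of each $\eta_i$ belong to $A$ versus $B$ in the resulting decomposition, and (ii) write down a clean Mirzakhani-type integration formula accounting for geodesic arcs with orthogonal boundary conditions. For fixed $n$ and bounded $s,k$, the combinatorics of (i) is bounded independently of $g$, so does not spoil the exponential gain; (ii) is handled either by invoking an explicit integration formula for orthogeodesics or, more cleanly, by doubling to $DX_{g,n}$ where the arcs become closed curves and Mirzakhani's closed-case integration (as used in Theorem \ref{Mirz13}) applies directly, the Weil--Petersson measure on $\mathcal{M}_{g,n}(L_g)$ corresponding to a specific involution-invariant slice inside $\mathcal{M}_{2g+n-1}$. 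Either way, the core calculation reduces to the same exponent-matching that drives Mirzakhani's original argument, and Corollary \ref{randomhc} then follows from Lemma \ref{hgeqHH+1}.
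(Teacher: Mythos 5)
Your overall architecture is right: prove that $\Probb(H(X_{g,n})<c_1)\to 0$ for $c_1<\ln 2/(2\pi)$ and then convert to $h_C$ via Lemma \ref{hgeqHH+1} (the paper indeed states the corollary as an immediate consequence of Theorem \ref{randomH} and that lemma). The exponent matching at the end --- $e^{4\pi c_1 g_A}$ against the polynomial gain from Lemma \ref{mir13corollary3.7}, with the hypothesis $\limsup \|L_g\|_1/\log g<1$ absorbing the boundary contribution --- is also exactly what drives the paper's argument. But the step you flag as ``the main obstacle'' is a genuine gap, not bookkeeping, and neither of your proposed fixes works as stated. There is no Mirzakhani-type integration formula of the product form you write down for systems containing free-boundary orthogeodesic arcs: cutting along such an arc does not produce a bordered surface whose moduli-space volume is $V_{g_A,k_A}(\ldots,y_j,\ldots)$, because the arc endpoints subdivide the boundary circles $\eta_i$ and the lengths of the resulting sub-arcs are additional parameters not seen by $V_{g_A,k_A}$. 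The doubling alternative also fails: the locus of involution-symmetric surfaces has measure zero in $\mathcal{M}_{2g+n-1}$, so applying the closed-case integration formula on the double says nothing about $\Probb$ on $\mathcal{M}_{g,n}(L_g)$. Finally, your claim that the number $k$ of arcs is bounded independently of $g$ (so that the endpoint combinatorics is controlled) is unjustified: a priori $\ell(\Gamma)$ can be as large as $c_1\pi(2g-2+n)$ and the arcs can be short, so $k$ can grow with $g$.

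The paper avoids all of this with a geometric reduction you are missing. It first shows (Proposition \ref{lowerboundcollarwith}) that with probability tending to $1$ every boundary component has a half-collar of width $(\tfrac12-\epsilon)\log g$; this forces every free-boundary arc of $\partial_I X$ to have length at least $(1-2\epsilon)\log g$, hence the number of arcs satisfies $m\leq c_1k\pi/((1-2\epsilon)\log g)$, i.e.\ is small relative to the area $|X|=k\pi$. It then replaces $X$ by a small neighborhood $\tilde X$ of $X\cup\partial X_{g,n}$: after checking no boundary loop of this neighborhood is trivial or redundant, $\partial_I\tilde X$ is a closed separating multicurve (no arcs), with $|\tilde X|\leq(1+\epsilon')|X|$ and $\ell(\partial_I\tilde X)\leq\sum_iL_g^i+c_1|\tilde X|$. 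This reduces the whole estimate to counting separating closed multicurves, where the standard integration formula (packaged in \eqref{eq-xgn-Ng0k} and Proposition \ref{boundchik}) applies. To salvage your route you would need to either supply and prove an integration formula for orthogeodesic arc systems, or import this thickening trick.
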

We start with some preliminaries before proving Theorem \ref{randomH}.
%	We will conduct a similar analysis as the one in \cite{Mirz13}. 
	Fix $0<c_1<1$.
	If $H(X_{g,n})<c_1$, then there exists a half surface $X\subset X_{g,n}$ such that $\ell(\partial_I X)\leq c_1|X|$ and $|X|\leq \frac{1}{2}|X_{g,n}|$ with $X$ and $X_{g,n}-X$ connected. By doubling $X_{g,n}$ along $\partial X_{g,n}$,  $\partial_I X$ become closed geodesics, so they bound an embedded subsurface. Therefore $|X|$ is an integer multiple of $\pi$.
	Assume that $X_{g,n}$ satisfies  the condition $(A)$ and $|X|=k\pi$ with $k\leq 2g-2+n$, 
	then if  $\partial_I X$ contains $m$ geodesic arcs meeting orthogonally with $\partial X_{g,n}$, we have
	\begin{equation} \label{boundm}
		m\cdot(1-2\epsilon)\log g\leq c_1|X|=c_1k\pi. 
	\end{equation}
	Consider the $\delta$-neighborhood $N_\delta(X\cup \partial X_{g,n})$ of $X\cup \partial X_{g,n}$ for $\delta$ small enough. Then each boundary loop of $N_\delta(X\cup\partial X_{g,n})$ is homotopic to a simple closed geodesic. This is similar to the method of dealing with nonsimple closed geodesics in \cite{WX21,NWX20,mirzakhani2019lengths}. We shall prove that there do not exist two boundary loops that are homotopic to each other and no boundary loop is homotopically trivial for large $g$. If such loops exist, the geodesic arcs corresponding to them will bound a topological cylinder or a disk outside $X$ with a total length less than $\sum_{i=1}^nL_g^i+kc_1\pi$. Since $X_{g,n}-X$ is connected, then the cylinder or disk is just $X_{g,n}-X$. By isoperimetric inequalities (See \eg section $8.1$ in \cite{buser2010geometry}), $|X_{g,n}-X|\leq\sum_{i=1}^n L_g^i+kc_1\pi.$ However by our assumption $|X_{g,n}-X|\geq \frac{1}{2}|X_{g,n}|$, which is a contradiction for large $g$. Therefore $\partial X_{g,n}\cup X$ deforms to an embedded subsurface $\tilde{X}$ of $X_{g,n}$ containing $X$. By (\ref{boundm}) for any fixed $\epsilon^\prime>0$ and  for $g$ large enough, $$k\pi\leq |\tilde{X}|\leq k\pi+m\pi\leq (1+\epsilon^\prime)k\pi\leq \frac{1+\epsilon^\prime}{2}|X_{g,n}|,$$
	and $$\ell(\partial_I \tilde{X})\leq \sum_{i=1}^nL_g^n+c_1|X|\leq  \sum_{i=1}^nL_g^n+c_1|\tilde{X}|.$$ 
	Therefore  \begin{equation}\label{hcprobineq}
		\begin{aligned}
			&\Probb\Big(X_{g,n}\in \mathcal{M}_{g,n}(L_g); H(X_{g,n})<c_1 \Big)\\
			\leq &\Probb\Big(X_{g,n}\in \mathcal{M}_{g,n}(L_g); (A) \ \ \text{fails} \Big)\\
			+&\Probb\Big(X_{g,n}\in \mathcal{M}_{g,n}(L_g); \exists\ \tilde{X}, |\chi(\tilde{X})|\leq\frac{1+\epsilon^\prime}{2}|\chi(X_{g,n})|\\
			&\text{and } \ell(\partial_I \tilde{X})\leq  \sum_{i=1}^nL_g^i+c_1|\tilde{X}|
			\Big).
		\end{aligned}
	\end{equation}
	We have proved that the first term converges to $0$ as $g\to \infty$. In what follows we prove the second term also goes to 0 as $g\rightarrow 0$. We define 
	\begin{equation*}
		\tilde{W}_k=\begin{cases}
			V_{\frac{k}{2},2}\ \ \ \ \ \text{if $k$ is even}\\
			V_{\frac{k+1}{2},1}\ \ \text{ if $k$ is odd}.
		\end{cases}
	\end{equation*}
	It  is slightly different from $W_k$ in \cite{NWX20}.
	
	\begin{proposition}\label{boundchik}
		For any constant $c_2$ larger than $c_1$, we have $$
		\begin{aligned}
			&\Probb\Big(X_{g,n}\in \mathcal{M}_{g,n}(L_g);\ \exists\ \tilde{X},|\chi(\tilde{X})|=m,\\
			&\ell(\partial_I \tilde{X})\leq \sum_{i=1}^n L_g^i+2c_1m\pi \Big)
			\prec\frac{e^{2\pi mc_2+\frac{c_2}{c_1}\sum_{i=1}^nL_g^i}\tilde{W}_m\tilde{W}_{2g-2+n-m}}{V_{g,n}}.\\
		\end{aligned}
		$$
	\end{proposition}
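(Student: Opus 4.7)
The plan is to apply Mirzakhani's integral formula (MIF) to the separating multicurve $\gamma = \partial_I \tilde X$, and then estimate the resulting Weil--Petersson integral using Lemmas~\ref{Mirz13lemma} and~\ref{NWXlemma}. First I enumerate: $\tilde X$ has some genus $g_0$ with $k_0 = n_1 + s$ boundary components, where $n_1 \leq n$ are among $\eta_1, \ldots, \eta_n$ and $s$ are the simple closed geodesics forming $\gamma$. The constraint $|\chi(\tilde X)| = m$ forces $2g_0 - 2 + k_0 = m$, and the complement is of type $(g_1, k_1)$ with $2g_1 - 2 + k_1 = 2g - 2 + n - m$. Since $n$ is fixed, there are $O_n(m)$ topological configurations, absorbed into the implicit constant. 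Writing $I \sqcup J = \{1, \ldots, n\}$ for the partition of boundary labels and setting $L' := \sum_i L_g^i + 2\pi c_1 m$, MIF gives
\[
\Expp\bigl[\#\{\tilde X\}\bigr] \prec \sum \frac{1}{V_{g,n}(L_g)\, s!} \int_{\sum x_i \leq L'} V_{g_0, k_0}(L_g^I, x)\, V_{g_1, k_1}(L_g^J, x) \prod_{i=1}^s x_i\, dx_i.
\]

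Next, apply Lemma~\ref{NWXlemma} to upper-bound each of $V_{g_0, k_0}(L_g^I, x)$ and $V_{g_1, k_1}(L_g^J, x)$ and to lower-bound $V_{g,n}(L_g)$. The factors $\prod_{i=1}^n \sinh(L_g^i/2)/(L_g^i/2)$ from the numerator cancel exactly with those in the denominator, and multiplying by the MIF Jacobian $\prod x_i$ reduces the integrand to $\prod_{i=1}^s 4\sinh^2(x_i/2)/x_i$. The elementary inequality $4\sinh^2(x/2) \leq x\sinh(x)$, verified term-by-term from the Taylor series, bounds this by $\prod_i \sinh(x_i)$. Since $c_2 > c_1$, one has $\sinh(x) \leq \tfrac{1}{2}e^{(c_2/c_1)x}$ uniformly for $x \geq 0$, so integrating over the simplex $\{x_i \geq 0,\ \sum x_i \leq L'\}$ yields
\[
\int_{\sum x_i \leq L'} \prod_{i=1}^s \frac{4\sinh^2(x_i/2)}{x_i}\, dx \;\leq\; 2^{-s}\, e^{(c_2/c_1)L'}\, \frac{(L')^s}{s!}.
\]

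Finally, iterating $V_{g, n+4} \leq V_{g+1, n+2}$ from Lemma~\ref{Mirz13lemma}(2) until the number of boundary components lies in $\{1, 2\}$ produces $V_{g_0, k_0} \leq \tilde W_m$ and $V_{g_1, k_1} \leq \tilde W_{2g-2+n-m}$. Combining the above and using the identity $(c_2/c_1)L' = 2\pi c_2 m + (c_2/c_1)\sum L_g^i$, together with the bound $\Probb(\exists \tilde X) \leq \Expp[\#\{\tilde X\}]$ for an integer-valued count, yields the proposition.

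The main obstacle is the polynomial prefactor $(L')^s/s!$ with $s \leq m + 2$: because this factor depends on $m$ it cannot be swallowed into the implicit constant. One handles it by proving the intermediate estimate with an auxiliary $c_2' \in (c_1, c_2)$ in place of $c_2$, then absorbing the polynomial into the gap $e^{2\pi(c_2 - c_2')m}$ via the subexponential growth of $(L')^s/s!$ for $L'$ large (guaranteed as $g \to \infty$). This step uses in an essential way the freedom $c_2 > c_1$ in the statement.
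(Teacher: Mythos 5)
Your proposal is correct and follows essentially the same route as the paper: Mirzakhani's integral formula applied to the separating multicurve (the computation already recorded in \eqref{eq-xgn-Ng0k}), the volume bounds of Lemmas \ref{NWXlemma} and \ref{Mirz13lemma} together with the $\sinh$-comparison, the simplex volume $(L')^s/s!$, reduction to $\tilde{W}_m\tilde{W}_{2g-2+n-m}$, and absorption of the leftover factors into the gap $c_2>c_1$. One point to state more carefully in your last paragraph: since $s$ can be as large as $m+2$ and $L'\asymp m$, a single factor $(L')^s/s!$ is \emph{not} subexponential in $m$ (its maximum over $s\leq m+2$ is of order $(2\pi ec_1)^m$), so you must combine it with the extra $1/s!$ coming from MIF and sum as $\sum_s (L')^s/(s!)^2\leq e^{2\sqrt{L'}}$ (exactly as the paper does), which is $e^{o(m+\|L_g\|_1)}$ and hence genuinely absorbable into $e^{2\pi(c_2-c_2')m+(c_2/c_1-c_2'/c_1)\sum_i L_g^i}$.
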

	\begin{proof}
		By the same calculation as \eqref{eq-xgn-Ng0k}, if $2g_0-2+k=m$, take $L=\sum_{i=1}^nL_g^i+2c_1m\pi$, then $$
		\begin{aligned}
			&\Probb\Big(X_{g,n}\in \mathcal{M}_{g,n}(L_g);\ \exists\ \tilde{X} \textit{ of type } S_{g_0,k},\ 
			\ell(\partial_I \tilde{X})\leq L\Big)\\
\prec&\Expp\Big( N_{g_0,k}(X_{g,n},L)      \Big)\\
\prec&\sum_{s\leq k}\frac{V_{g_0,k}V_{g+1-g_0,n-k}}{V_{g,n}}\frac{e^LL^s}{(s!)^2}
\prec\frac{V_{g_0,k}V_{g+1-g_0,n-k}}{V_{g,n}}e^{L+2L^\frac{1}{2}}.\\
		\end{aligned}
		$$
This follows by  the fact that $\sum_{s}\frac{(x^2)^s}{(s!)^2}\leq (\sum_{s}\frac{x^s}{s!})^2\leq e^{2x}$ for $x=L^{\frac{1}{2}}$. The constants implied here only depend on $n$. Summing it over all possible $(g_0,k)$, we have $$
		\begin{aligned}
			&\Probb\Big(X_{g,n}\in \mathcal{M}_{g,n}(L_g);\ \exists\ \tilde{X},|\chi(\tilde{X})|=m,\\
			&\ell(\partial_I \tilde{X})\leq \sum_{i=1}^n L_g^i+2c_1m\pi \Big)
			\prec\frac{ m e^{L+2L^{\frac{1}{2}}}\tilde{W}_m\tilde{W}_{2g-2+n-m}}{V_{g,n}}\\
\prec&\frac{e^{2\pi mc_2+\frac{c_2}{c_1}\sum_{i=1}^nL_g^i}\tilde{W}_m\tilde{W}_{2g-2+n-m}}{V_{g,n}},
		\end{aligned}
		$$
where the implied constants depend only on $n,c_1,c_2$.
	\end{proof}
	
	\begin{proposition}\label{boundepsilonprime}
		The following estimate holds $$
		\sum_{m\leq\frac{1+\epsilon^\prime}{2}(2g-2+n)}e^{2\pi mc_2}\tilde{W}_m\tilde{W}_{2g-2+n-m}\prec\sum_{m\leq \frac{1}{2}(2g-2+n)} e^{2\pi mc_2\frac{1+\epsilon^\prime}{1-\epsilon^\prime}}\tilde{W}_m\tilde{W}_{2g-2+n-m}.
		$$
	\end{proposition}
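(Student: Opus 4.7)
The plan is to split the sum on the left hand side at the midpoint $m=\frac{2g-2+n}{2}$ and use the symmetry of the product $\tilde W_m\tilde W_{2g-2+n-m}$ in the index $m\leftrightarrow 2g-2+n-m$ to fold the upper tail back into the lower half, at the cost of enlarging the exponential rate from $c_2$ to $c_2\,\frac{1+\epsilon'}{1-\epsilon'}$.

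More precisely, write $N=2g-2+n$ and decompose
\[
\sum_{m\leq \frac{1+\epsilon'}{2}N}e^{2\pi m c_2}\tilde W_m\tilde W_{N-m}
=\sum_{m\leq N/2}e^{2\pi m c_2}\tilde W_m\tilde W_{N-m}
+\sum_{N/2<m\leq \frac{1+\epsilon'}{2}N}e^{2\pi m c_2}\tilde W_m\tilde W_{N-m}.
\]
The first sum is trivially dominated by the right hand side, since $c_2\frac{1+\epsilon'}{1-\epsilon'}>c_2$ and $m\geq 0$. For the second sum I would make the change of variable $m'=N-m$, which ranges over $\frac{1-\epsilon'}{2}N\leq m'<N/2$. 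By definition $\tilde W_m\tilde W_{N-m}=\tilde W_{m'}\tilde W_{N-m'}$.

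The key numerical step is to control the exponential. Since $m'\geq \frac{1-\epsilon'}{2}N$, one has $N\leq \frac{2m'}{1-\epsilon'}$, and therefore
\[
m=N-m'\leq \frac{2m'}{1-\epsilon'}-m'=\frac{1+\epsilon'}{1-\epsilon'}\,m',
\]
so that $e^{2\pi m c_2}\leq e^{2\pi c_2\frac{1+\epsilon'}{1-\epsilon'}m'}$. Term by term this gives
\[
\sum_{N/2<m\leq \frac{1+\epsilon'}{2}N}e^{2\pi m c_2}\tilde W_m\tilde W_{N-m}
\leq \sum_{\frac{1-\epsilon'}{2}N\leq m'<N/2}e^{2\pi c_2\frac{1+\epsilon'}{1-\epsilon'}m'}\tilde W_{m'}\tilde W_{N-m'},
\]
which is bounded by the right hand side of the proposition.

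There is no real analytic obstacle here; the whole proposition is a combinatorial rearrangement. The only care needed is the careful bookkeeping of the inequality $N-m'\leq \frac{1+\epsilon'}{1-\epsilon'}m'$, which is the mechanism that converts the slightly enlarged summation range $[0,\frac{1+\epsilon'}{2}N]$ into an enlargement of the exponential rate from $c_2$ to $c_2\frac{1+\epsilon'}{1-\epsilon'}$, and the use of the symmetry $\tilde W_m\tilde W_{N-m}=\tilde W_{N-m}\tilde W_m$ which is immediate from the definition of $\tilde W_k$.
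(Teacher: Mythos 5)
Your proof is correct and follows essentially the same route as the paper: fold each term with $m>\frac{1}{2}(2g-2+n)$ onto $m'=2g-2+n-m$ using the symmetry of $\tilde W_m\tilde W_{2g-2+n-m}$ and the bound $m\leq\frac{1+\epsilon'}{1-\epsilon'}m'$, which is exactly the paper's argument. The only difference is that you spell out the trivial domination of the lower half of the sum, which the paper leaves implicit.
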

	\begin{proof}
		If $\frac{1}{2}(2g-2+n)\leq m\leq\frac{1+\epsilon^\prime}{2}(2g-2+n)$, then take $m^\prime =2g-2+n-m$ and we have $\frac{m}{m^\prime}\leq \frac{1+\epsilon^\prime}{1-\epsilon^\prime}.$ Thus
		$$
		e^{2\pi mc_2}\tilde{W}_m\tilde{W}_{2g-2+n-m}\leq e^{2\pi m^\prime c_2\frac{1+\epsilon^\prime}{1-\epsilon^\prime}}\tilde{W}_{m^\prime}\tilde{W}_{2g-2+n-m^\prime}.
		$$
This completes the proof.
	\end{proof}

We are now in position to give a proof of Theorem \ref{randomH}.
	\begin{proof}[Proof of Theorem\ref{randomH}]
		By  Lemma \ref{Mirz13lemma} and Lemma \ref{mir13corollary3.7}, for both $k=1,2$,  if we assume that
		$4\pi\frac{1+\epsilon^\prime}{1-\epsilon^\prime}c_2<2\ln2$, then 
		$$
			\sum_{m\leq \frac{1}{2}(2g-2+n)} e^{2\pi mc_2\frac{1+\epsilon^\prime}{1-\epsilon^\prime}}\tilde{W}_m\tilde{W}_{2g-2+n-m}
\prec\frac{V_{g,n}}{g}.
		$$
		Combining it with  Proposition \ref{boundchik} and Proposition \ref{boundepsilonprime} yields that, if we assume that
		$4\pi\frac{1+\epsilon^\prime}{1-\epsilon^\prime}c_2<2\ln2$, then
		$$ 
		\begin{aligned}
			&\Probb\Big(X_{g,n}\in \mathcal{M}_{g,n}(L_g);\ \exists\ \tilde{X},\ |\chi(\tilde{X})|\leq\frac{1+\epsilon^\prime}{2}|\chi(X_{g,n})| \\
			&\ell(\partial_I \tilde{X})\leq \sum_{i=1}^n L_g^i+c_1|\tilde{X}|
			\Big)
			\prec\frac{e^{\frac{c_2}{c_1}\|L_g\|_1}}{g}.
		\end{aligned}
		$$
		Since $c_2>c_1$ can be arbitrarily close to $c_1$ and $\epsilon^\prime>0$ can be arbitrarily small, and $\limsup\limits_{g\to\infty}\frac{\|L_g\|_1}{\log g}<1$, now theorem follows from (\ref{hcprobineq}).
	\end{proof}
	
	\vskip.3cm
	\subsection{Proof of Theorem \ref{maintheorem2}}
	By Theorem \ref{randomHj} and Corollary \ref{randomhc}, for some fixed positive numbers $c,c_1$ we have $$\lim \limits_{g\to \infty}\Probb\Big(X_{g,n}\in \mathcal{M}_{g,n}(L_g);\ \tilde{h}_J(X_{g,n})>c, \ h_C(X_{g,n})>c_1 \Big)=1.$$
	It follows from Lemma \ref{Jammes} that $$\lim \limits_{g\to \infty}\Probb\Big(X_{g,n}\in \mathcal{M}_{g,n}(L_g); \
	\sigma_1(X_{g,n})\geq\frac{c\cdot c_1}{4}
	\Big)=1.$$ This completes the proof of Theorem \ref{maintheorem2} by taking $C=\frac{c\cdot c_1}{4}$. 
		
	\bibliographystyle{plain}
	\bibliography{Reference} 
	
\end{document}